\theoremstyle{plain}
\newtheorem{theorem}{Theorem}[section]
\newtheorem{proposition}[theorem]{Proposition}
\newtheorem{lemma}[theorem]{Lemma}
\newtheorem{corollary}[theorem]{Corollary}
\theoremstyle{definition}
\newtheorem{definition}{Definition}[section]
\newtheorem{assumption}[definition]{Assumption}
\newtheorem{remark}[definition]{Remark}
\theoremstyle{remark}
\newtheorem{example}{Example}[section]
\numberwithin{equation}{section}
\DeclareMathOperator\prox{prox}
\DeclareMathOperator\dom{dom}
\DeclareMathOperator\argmin{argmin}
\DeclareMathOperator\argmax{argmax}
\newcommand{\xh}{\hat{x}}
\DeclareMathOperator\trace{tr}
\DeclareMathOperator\sgn{sgn}
\newcommand{\beq}{\begin{equation}\nonumber} \newcommand{\eeq}{\end{equation}}
\newcommand{\bi}{\begin{itemize}}
\newcommand{\be}{\begin{enumerate}}
\newcommand{\ei}{\end{itemize}}
\newcommand{\ee}{\end{enumerate}}
\newcommand{\calH}{{\cal H}}
\newcommand{\R}{{\mathbb R}}
\newcommand{\N}{{\mathbb N}}
\newcommand{\calK}{{\cal K}}
\newcommand{\calO}{{\cal O}}
\newcommand{\calC}{{\cal C}}
\newcommand{\lb}{{\langle}}
\newcommand{\rb}{{\rangle}}
\def\boldf#1{\hbox{\rlap{$#1$}\kern.4pt{$#1$}}}
\newcommand{\trans}{^{\scriptscriptstyle \top}}
\newcommand{\rd}{\R^d}
\newcommand{\moreau}[2]{{#1}_{#2}}
\newcommand{\rnn}{\R^{n\times n}}
\begin{document}

\title{Hybrid Conditional Gradient - Smoothing Algorithms with Applications
to Sparse and Low Rank Regularization}

\author{A. Argyriou \\
{\' E}cole Centrale Paris, Center for Visual Computing \\
andreas.argyriou@ecp.fr \\
\\
M. Signoretto \\
KU Leuven, ESAT-STADIUS \\
Kasteelpark Arenberg 10, B-3001 Leuven, Belgium \\
marco.signoretto@esat.kuleuven.be \\
\\
J. Suykens \\
KU Leuven, ESAT-STADIUS \\
Kasteelpark Arenberg 10, B-3001 Leuven, Belgium \\
johan.suykens@esat.kuleuven.be
}

\date{}

\maketitle


\section{Introduction}
\label{sec:intro}

{\em Conditional gradient} methods are old and well studied
optimization algorithms. Their origin dates at least to the 50's and the Frank-Wolfe algorithm for 
quadratic programming \cite{frank_wolfe} but they apply to much more general optimization problems. 
General formulations of conditional gradient algorithms have been studied in the past
and various convergence properties of these algorithms have been proven.
Moreover, such algorithms have found application in many fields, such as optimal control, 
statistics, signal processing, computational geometry and machine learning. Currently, interest in 
conditional gradient methods is undergoing a revival because of their computational advantages
when applied to certain large scale optimization problems.
Such examples are {\em regularization} problems involving sparsity or low rank constraints,
which appear in many widely used methods in machine learning.

Inspired by such algorithms, in this chapter we study a first-order method 
for solving certain convex optimization problems. We focus on problems of the form
\beq
\min\left\{
f(x) + g(Ax) + \omega(x) : x\in \calH
\right\} \;.
\label{eq:intro}
\eeq
over a real Hilbert space $\calH$. We assume that $f$ is a convex function with {\em H{\"o}lder continuous gradient}, 
$g$ a {\em Lipschitz continuous} convex function, $A$ a bounded linear operator 
and $\omega$ a convex function defined over a {\em bounded domain}.
We also assume that the computational operations available are
the {\em gradient} of $f$, the {\em proximity operator} of $g$ and a {\em subgradient
of the convex conjugate} $\omega^*$.\footnote{For the precise assumptions 
required, see Assumptions \ref{ass:hybrid}, \ref{ass:oracle}.}
A particularly common type of problems covered by \eqref{eq:intro} is
\beq
\min\left\{
f(x) + g(Ax) : x\in \calC
\right\} \;,
\eeq
where $\calC$ is a bounded, closed, convex subset of $\calH$. Common such examples
are regularization problems with one or more penalties in the objective
(as the term $g\circ A$) and one penalty as a constraint described by $\calC$. 

Before presenting the algorithm, we review in Section \ref{sec:frank_wolfe} 
a generic conditional gradient algorithm which has been well studied in the past.
This standard algorithm 
can be used for solving problems of the form \eqref{eq:intro} whenever $g=0$.
However, the conditional gradient algorithm
cannot handle problems with a nonzero term $g$, because it would require computation of a subgradient of a composite
convex conjugate function, namely a subgradient of $(g\circ A + \omega)^*$.
In many cases of interest, there is no simple rule for such subgradients and 
the computation itself requires an iterative algorithm.

Thus, in Section \ref{sec:hybrid} we discuss an alternative approach that combines ideas 
from both conditional gradient algorithms and smoothing proximal algorithms, such as Nesterov smoothing. 
We call the resulting algorithm a {\em hybrid conditional gradient - smoothing} algorithm,
in short HCGS. This approach involves smoothing the $g$ term,
that is, approximating $g$ with a function whose gradient is Lipschitz continuous.
Besides this modification, HCGS is similar to the conditional gradient
algorithm. We show that, for suitable choices of the smoothing parameter, 
the estimates of the objective in HCGS converge to the minimum of \eqref{eq:intro}.
Moreover, the convergence rate is of the order of $\calO\left(\frac{1}{\varepsilon^2}\right)$  
iterations for attaining an accuracy of $\varepsilon$ in terms of the objective.
We do not claim originality, however, since similar theoretical results have appeared in 
recent work by Lan \cite{lan}. 

Our main focus is on highlighting applications of the hybrid approach on certain
applications of interest. 
To demonstrate the applicability of HCGS to regularization problems from 
machine learning, we present simulations on matrix problems with {\em simultaneous 
sparsity and low rank} penalizations. Examples of such applications are {\em graph denoising},
{\em link prediction} in social networks, {\em covariance estimation} and {\em sparse PCA}.
Each of these problems involves two penalties, an elementwise $\ell_1$ norm to promote
sparsity, and a {\em trace norm} to promote low rank. Standard algorithms 
may not be practical in high dimensional problems of this type.
As mentioned above, standard conditional gradient methods require a subgradient computation 
of a complicated function, whereas
proximal algorithms or subgradient based algorithms require 
an expensive singular value decomposition per iteration.
In contrast, HCGS requires only computation of dominant singular vectors,
which is more practical by means of the power iteration.
Thus, even though HCGS exhibits a slower asymptotic rate of
convergence than conditional gradient algorithms, Nesterov's method 
or the forward-backward algorithm, it scales much better to large 
matrices than these methods.


\section{Preliminaries from Convex Analysis}
\label{sec:convex}

Throughout the chapter, $\calH$ is a real Hilbert space endowed with norm $\|\cdot\|$
and inner product $\lb \cdot,\cdot\rb$.
As is standard in convex analysis, we consider extended value functions
$f:\calH \to (-\infty, +\infty]$ which can take the value $+\infty$.
With this notation, constraints can be written as indicator functions
that take the zero value inside the feasible set and $+\infty$
outside.

\begin{definition}
The domain of a function $f: \calH \to (-\infty, +\infty]$
is defined as the set $\dom f = \{x\in\calH : f(x) < +\infty \}$.
\end{definition}

\begin{definition}
The function $f: \calH \to (-\infty, +\infty]$ is called {\em proper}
if $\dom f \neq \varnothing$.
\end{definition}

\begin{definition}
The set of proper lower semicontinuous convex functions from $\calH$
to $(-\infty, +\infty]$ is denoted by $\Gamma_0(\calH)$.
\end{definition}


\begin{definition}
Let $f:\calH \to [-\infty, +\infty]$. The {\em convex conjugate} 
of $f$ is the function $f^*:\calH \to [-\infty, +\infty]$
defined as
\beq
f^*(x) = \sup\{ \lb u,x\rb - f(u) : u\in\calH\}
\eeq
for every $x\in\calH$.
\end{definition}

\begin{theorem}(Fenchel-Moreau) \cite[Thm. 13.32]{combettes_book}.
Every function $f\in \Gamma_0(\calH)$ is biconjugate,
\beq
f^{**}=f \,.
\eeq 
Moreover, $f^*\in\Gamma_0(\calH)$.
\label{thm:fenchel}
\end{theorem}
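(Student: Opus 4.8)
The plan is to prove the two claims separately, both by applying the Hahn--Banach separation theorem to the epigraph $\operatorname{epi} f=\{(x,\xi)\in\calH\times\R:f(x)\le\xi\}$, which is a nonempty, closed, convex subset of the Hilbert space $\calH\times\R$ exactly because $f$ is proper, lower semicontinuous and convex. I would first record the auxiliary fact that $f$ has a continuous affine minorant. Picking $x_1\in\dom f$ and $\xi_1<f(x_1)$, the point $(x_1,\xi_1)$ lies outside the closed convex set $\operatorname{epi} f$, so separating it (e.g.\ by projecting it onto $\operatorname{epi} f$) yields a continuous functional $(x,\xi)\mapsto\langle v,x\rangle+t\xi$ and $\alpha\in\R$ with $\langle v,x\rangle+t\xi\ge\alpha>\langle v,x_1\rangle+t\xi_1$ on $\operatorname{epi} f$; letting $\xi\to+\infty$ forces $t\ge0$, and comparing with $(x_1,f(x_1))\in\operatorname{epi} f$ gives $t(f(x_1)-\xi_1)>0$, hence $t>0$. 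Dividing through by $t$ produces $u_0\in\calH$ and $\beta_0\in\R$ with $f(x)\ge\langle u_0,x\rangle-\beta_0$ for all $x\in\calH$.

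Granting this, $f^*\in\Gamma_0(\calH)$ follows quickly: $f^*$ is the pointwise supremum of the continuous affine maps $x\mapsto\langle u,x\rangle-f(u)$, $u\in\dom f$, hence convex and lower semicontinuous; it is $>-\infty$ everywhere since $f^*(x)\ge\langle u_0,x\rangle-f(u_0)\in\R$ for any fixed $u_0\in\dom f$; and it is not identically $+\infty$ since the affine minorant gives $f^*(u_0)\le\beta_0<+\infty$. Applying the same observation to $f^*$, which is itself in $\Gamma_0(\calH)$, also shows $f^{**}(x)>-\infty$ for every $x$. The inequality $f^{**}\le f$ is then the Fenchel--Young direction: for all $u,x$ the definition of $f^*$ gives $\langle u,x\rangle-f^*(u)\le f(x)$, and taking the supremum over $u$ yields $f^{**}(x)\le f(x)$.

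The remaining inequality $f\le f^{**}$ is the crux, and I would argue it by contradiction. Suppose $f^{**}(x_0)<f(x_0)$ for some $x_0$; since $f^{**}(x_0)>-\infty$ and the strict inequality forces $f^{**}(x_0)<+\infty$, we have $f^{**}(x_0)\in\R$ and $(x_0,f^{**}(x_0))\notin\operatorname{epi} f$. Separating this point from $\operatorname{epi} f$ gives a continuous functional $(v,t)$ and $\alpha\in\R$ with $\langle v,x\rangle+t\xi\ge\alpha>\langle v,x_0\rangle+t\,f^{**}(x_0)$ on $\operatorname{epi} f$, and again $t\ge0$. If $t>0$, normalize to $t=1$; taking $\xi=f(x)$ for $x\in\dom f$ gives $\langle -v,x\rangle-f(x)\le-\alpha$, so $f^*(-v)\le-\alpha$ and hence $f^{**}(x_0)\ge\langle -v,x_0\rangle-f^*(-v)\ge\alpha-\langle v,x_0\rangle>f^{**}(x_0)$, a contradiction. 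If $t=0$, then $\langle v,x\rangle\ge\alpha>\langle v,x_0\rangle$ for all $x\in\dom f$; for every $\mu\ge0$ the affine minorant gives $\langle u_0,x\rangle-\mu\langle v,x\rangle-f(x)\le\beta_0-\mu\alpha$ on $\dom f$, i.e.\ $f^*(u_0-\mu v)\le\beta_0-\mu\alpha$, so $f^{**}(x_0)\ge\langle u_0-\mu v,x_0\rangle-f^*(u_0-\mu v)\ge\langle u_0,x_0\rangle-\beta_0+\mu(\alpha-\langle v,x_0\rangle)$, and since $\alpha-\langle v,x_0\rangle>0$, letting $\mu\to+\infty$ forces $f^{**}(x_0)=+\infty$, contradicting $f^{**}(x_0)\in\R$. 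Either way we reach a contradiction, so $f^{**}=f$.

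The only substantive step is the last one, and within it the ``vertical'' case $t=0$, where the separating hyperplane in $\calH\times\R$ says nothing about the $\xi$-coordinate. This is precisely why the continuous affine minorant has to be produced first: it provides a genuinely non-vertical functional $u_0$ that can be perturbed by arbitrarily large multiples of the vertical direction $v$ so as to drive $f^{**}(x_0)$ to $+\infty$. Beyond that, the points to watch are routine: the separation theorem invoked is the version for a closed convex set and an exterior point in a Hilbert space, and one must keep track of the value $+\infty$ when checking that $\operatorname{epi} f$ is closed and that the relevant conjugate values lie in $\R$.
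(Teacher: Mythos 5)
Your proposal is a correct and complete proof. The paper itself offers no argument for this theorem: it is stated as a classical result and deferred entirely to the citation \cite[Thm.~13.32]{combettes_book}, so there is no ``paper proof'' to match against. What you have written is the standard Hahn--Banach route: strict separation of a point from the closed convex set $\operatorname{epi} f \subseteq \calH\times\R$ (legitimately obtained by metric projection in the Hilbert setting), first to manufacture a continuous affine minorant, then to rule out $f^{**}(x_0)<f(x_0)$. The bookkeeping is right where it matters: $f(x_1)\in\R$ because $f$ maps into $(-\infty,+\infty]$ and $x_1\in\dom f$, which is what forces $t>0$ in the first separation; $f^{**}(x_0)$ is shown to be finite before it is used as the ordinate of the separated point; and the vertical case $t=0$ is handled by perturbing the affine minorant $u_0$ with arbitrarily large multiples of $v$, which is exactly the step that naive versions of this proof omit. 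This is essentially the same machinery that underlies the cited reference's treatment (which characterizes $\Gamma_0$ functions as suprema of their continuous affine minorants before deducing biconjugacy), so your argument buys self-containedness at no loss of generality. Two cosmetic remarks only: the symbol $u_0$ is used both for the slope of the affine minorant and for ``any fixed point of $\dom f$'' in the paragraph on $f^*\in\Gamma_0(\calH)$, which should be disambiguated; and it is worth stating explicitly that closedness of $\operatorname{epi} f$ is the epigraphical characterization of lower semicontinuity, since that equivalence is the only place the lsc hypothesis enters.
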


\begin{definition}
Let $f:\calH \to (-\infty, +\infty]$ be proper. A {\em subgradient} 
of $f$ at $x\in\calH$ is a vector $u\in\calH$ satisfying
\beq
\lb u, y-x\rb + f(x) \leq f(y)
\eeq
for every $y\in\calH$. The set of subgradients of $f$ at $x$ is called the
{\em subdifferential} of $f$ at $x$ and is denoted as $\partial f(x)$.
\end{definition}

\begin{proposition}
Let $f:\calH \to (-\infty,+\infty]$ be proper and $x\in\calH$. Then
$\partial f(x) \neq \varnothing$ implies $x\in \dom f$.
\label{prop:dom_grad}
\end{proposition}

\begin{theorem}
\cite[Thm. 16.23]{combettes_book}.
Let $f\in\Gamma_0(\calH), x\in\calH, u\in\calH$. Then 
\beq
u\in\partial f(x) \iff x \in \partial f^*(u) \iff f(x)+f^*(u) = \lb x,u\rb \;,
\eeq
\beq
(\partial f)^{-1} = \partial f^*.
\eeq
\label{thm:duality}
\end{theorem}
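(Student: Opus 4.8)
\medskip
\noindent\textbf{Proof proposal.}
The plan is to route everything through the Fenchel--Young equality and then exploit biconjugacy. The key intermediate claim, valid for any proper $f$, is
\beq
u\in\partial f(x)\iff f(x)+f^*(u)=\lb x,u\rb\,.
\eeq
Once this is established, the second equivalence comes essentially for free: since $f\in\Gamma_0(\calH)$, Theorem~\ref{thm:fenchel} gives $f^*\in\Gamma_0(\calH)$ and $f^{**}=f$, so applying the key claim to $f^*$ in place of $f$ yields $x\in\partial f^*(u)\iff f^*(u)+f^{**}(x)=\lb u,x\rb\iff f(x)+f^*(u)=\lb x,u\rb$, which is exactly the condition characterizing $u\in\partial f(x)$. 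The identity $(\partial f)^{-1}=\partial f^*$ is then just a restatement: $(x,u)$ belongs to the graph of $\partial f$ iff $(u,x)$ belongs to the graph of $\partial f^*$.

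To prove the key claim I would argue directly from the definitions. For the forward direction, rewrite the subgradient inequality $\lb u,y-x\rb+f(x)\le f(y)$ as $\lb u,y\rb-f(y)\le\lb u,x\rb-f(x)$ and take the supremum over $y\in\calH$; this gives $f^*(u)\le\lb x,u\rb-f(x)$, hence $f(x)+f^*(u)\le\lb x,u\rb$. Combined with the always-valid Fenchel--Young inequality $f(x)+f^*(u)\ge\lb x,u\rb$ this forces equality. For the converse, if $f(x)+f^*(u)=\lb x,u\rb$ then both values are finite, and the bound $f^*(u)\ge\lb u,y\rb-f(y)$ for every $y$ rearranges to $\lb u,y-x\rb+f(x)\le f(y)$, i.e. $u\in\partial f(x)$.

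I expect the only delicate point to be the $\pm\infty$ bookkeeping underlying the Fenchel--Young inequality: when $f(x)<+\infty$ it is immediate from the definition of $f^*$ as a supremum, and when $f(x)=+\infty$ it holds trivially provided $f^*(u)>-\infty$, which follows from properness of $f$ by evaluating the defining supremum of $f^*$ at any point of $\dom f$. One should also note that the subgradient inequality already forces $f(x)<+\infty$ (consistently with Proposition~\ref{prop:dom_grad}), so no degenerate case arises in the forward direction. Everything else is symmetric manipulation of the definitions and requires no analytic input.
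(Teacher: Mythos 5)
Your proof is correct. Note, however, that the paper does not prove this statement at all: it is quoted verbatim from Bauschke and Combettes (Thm.\ 16.23) as a background fact, so there is no in-paper argument to compare against. Your route --- establish the Fenchel--Young characterization $u\in\partial f(x)\iff f(x)+f^*(u)=\lb x,u\rb$ for proper $f$ directly from the definitions, then obtain the second equivalence by applying it to $f^*$ and invoking biconjugacy (Theorem~\ref{thm:fenchel}), with $(\partial f)^{-1}=\partial f^*$ read off as a graph reversal --- is exactly the standard textbook proof, and your handling of the $\pm\infty$ cases (properness forcing $f^*>-\infty$, the subgradient inequality forcing $x\in\dom f$) closes the only points where such an argument could leak.
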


\begin{theorem}
\cite[Thm. 16.37]{combettes_book}.
Let $f \in \Gamma_0(\calH)$, $\calK$ a Hilbert space, $g \in \Gamma_0(\calK)$ 
and $A: \calH\to\calK$ a bounded linear operator. If $\dom g = \calK$ then 
$$\partial (f+g\circ A) = \partial f + A^*\circ\partial g \circ A\;.$$
\label{thm:grad_sum}
\end{theorem}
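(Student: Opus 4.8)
The plan is to prove the two inclusions separately: one is qualification-free, and the other will follow from Fenchel duality applied to an auxiliary minimization problem, with the hypothesis $\dom g = \calK$ supplying exactly the constraint qualification that makes the dual problem solvable. First I would dispose of the inclusion $\partial f(x) + A^*\bigl(\partial g(Ax)\bigr) \subseteq \partial(f+g\circ A)(x)$, which needs no assumption: if $u\in\partial f(x)$ and $v\in\partial g(Ax)$, then for every $y\in\calH$ we have $\lb u,y-x\rb\le f(y)-f(x)$ and $\lb A^*v,y-x\rb=\lb v,Ay-Ax\rb\le g(Ay)-g(Ax)$, and adding these two inequalities shows $u+A^*v\in\partial(f+g\circ A)(x)$. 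The substance of the theorem lies in the reverse inclusion, so the remainder of the argument is devoted to it.

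For the reverse inclusion, fix $w\in\partial(f+g\circ A)(x)$. By Proposition \ref{prop:dom_grad}, $x\in\dom(f+g\circ A)$, and the subgradient inequality says precisely that $x$ minimizes $y\mapsto h(y)+g(Ay)$ over $\calH$, where $h:=f-\lb w,\cdot\rb\in\Gamma_0(\calH)$; note $\partial h(x)=\partial f(x)-w$ and the minimum value $h(x)+g(Ax)$ is finite. I would then introduce the marginal function $\psi:\calK\to[-\infty,+\infty]$,
\beq
\psi(z)=\inf_{y\in\calH}\bigl\{h(y)+g(Ay+z)\bigr\},
\eeq
which is convex with $\psi(0)=h(x)+g(Ax)\in\R$, and compute, via the substitution replacing $z$ by $Ay+z$ in the definition of the conjugate, that $\psi^*(\eta)=h^*(-A^*\eta)+g^*(\eta)$ for all $\eta\in\calK$.

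The one genuinely nontrivial step — and the only place topology enters — is to show $\partial\psi(0)\neq\varnothing$, and this is where $\dom g=\calK$ is used: a proper lower semicontinuous convex function that is finite on all of $\calK$ is continuous on $\calK$, so $z\mapsto h(x)+g(Ax+z)$ is continuous at $0$; since this function majorizes $\psi$, the convex function $\psi$ is bounded above near $0$ and hence continuous there, so $\partial\psi(0)\neq\varnothing$. Picking $-\bar v\in\partial\psi(0)$ and applying Theorem \ref{thm:duality} to $\psi$ gives $\psi(0)+\psi^*(-\bar v)=0$, i.e.
\beq
\bigl[h(x)+h^*(A^*\bar v)-\lb x,A^*\bar v\rb\bigr]+\bigl[g(Ax)+g^*(-\bar v)-\lb Ax,-\bar v\rb\bigr]=0,
\eeq
where the identity $\lb x,A^*\bar v\rb=\lb Ax,\bar v\rb$ cancels the cross terms. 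Both bracketed quantities are nonnegative by the Fenchel--Young inequality, so both vanish, and Theorem \ref{thm:duality} then yields $A^*\bar v\in\partial h(x)=\partial f(x)-w$ and $-\bar v\in\partial g(Ax)$. Hence $w+A^*\bar v\in\partial f(x)$ and $w=(w+A^*\bar v)+A^*(-\bar v)\in\partial f(x)+A^*\bigl(\partial g(Ax)\bigr)$, as desired. The main obstacle is thus concentrated in the continuity/attainment step: proving that full domain forces continuity of $g$ (immediate in finite dimensions but requiring a Baire-category argument in a general Hilbert space) and hence nonemptiness of $\partial\psi(0)$; everything else reduces to Fenchel--Young bookkeeping.
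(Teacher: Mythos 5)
The paper offers no proof of this theorem --- it is imported verbatim from Bauschke and Combettes \cite[Thm. 16.37]{combettes_book} --- so there is nothing internal to compare against; I can only assess your argument on its own terms, and it is correct. The easy inclusion by adding the two subgradient inequalities is fine. For the hard inclusion, your route --- rewrite $w\in\partial(f+g\circ A)(x)$ as optimality of $x$ for $h+g\circ A$ with $h=f-\lb w,\cdot\rb$, form the marginal function $\psi(z)=\inf_y\{h(y)+g(Ay+z)\}$, compute $\psi^*(\eta)=h^*(-A^*\eta)+g^*(\eta)$, and extract the multiplier from $\partial\psi(0)$ --- is the classical perturbation--duality proof, and the place where $\dom g=\calK$ enters (an everywhere-finite lower semicontinuous convex function on a Banach space is continuous, so $\psi$ is bounded above near $0$ and hence subdifferentiable there) is exactly the right pressure point; this is also morally how the cited reference proceeds, via exactness of the conjugate of the sum under a qualification condition that $\dom g=\calK$ trivially implies. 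Two small points you should make explicit. First, $\psi$ nowhere takes the value $-\infty$: if $\psi(z_1)=-\infty$, then convexity together with the finiteness of $\psi$ on a neighborhood of $0$ would force $\psi(0)=-\infty$, contradicting $\psi(0)=h(x)+g(Ax)\in\R$; this is needed for $\psi$ to be a proper convex function so that continuity at $0$ and $\partial\psi(0)\neq\varnothing$ are legitimate. Second, Theorem \ref{thm:duality} as stated in the paper requires membership in $\Gamma_0$, which you have not verified for $\psi$ (marginal functions need not be lower semicontinuous); however, the only implications you actually use --- that a subgradient forces equality in Fenchel--Young, and that equality in Fenchel--Young forces a subgradient --- hold for arbitrary proper functions directly from the definitions of $\partial$ and of the conjugate, so the argument survives with that citation replaced by the elementary computation.
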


\begin{theorem}
\cite[Thm. 16.2]{combettes_book}.
Let $f:\calH \to (-\infty,+\infty]$ be proper. Then  
$$\argmin f = \{ x\in\calH : 0 \in \partial f (x) \} \;.$$
\label{thm:fermat}
\end{theorem}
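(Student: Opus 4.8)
The plan is to prove the two inclusions separately, in each case simply unwinding the definition of a subgradient with the candidate subgradient vector taken to be $0$. The key observation is that the inequality defining $0\in\partial f(x)$, namely $\lb 0, y-x\rb + f(x) \le f(y)$ for all $y\in\calH$, reduces — because $\lb 0, y-x\rb = 0$ — to exactly $f(x) \le f(y)$ for all $y$, which is precisely the statement that $x$ minimizes $f$ over $\calH$. So the theorem is essentially a rephrasing of one definition as another.

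First I would handle $\argmin f \subseteq \{x\in\calH : 0\in\partial f(x)\}$. Let $x\in\argmin f$. Since $f$ is proper, $\dom f\neq\varnothing$, so there is $y_0$ with $f(y_0)<+\infty$; minimality gives $f(x)\le f(y_0)<+\infty$, so $f(x)\in\R$ and the subgradient inequality is meaningful at $x$. For every $y\in\calH$ we have $f(x)\le f(y)$, which we rewrite as $\lb 0, y-x\rb + f(x) \le f(y)$. By the definition of the subdifferential this says $0\in\partial f(x)$.

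Conversely, for $\{x\in\calH : 0\in\partial f(x)\}\subseteq\argmin f$, suppose $0\in\partial f(x)$. By Proposition \ref{prop:dom_grad}, $x\in\dom f$, so $f(x)<+\infty$. The subgradient inequality for the vector $0$ reads $\lb 0, y-x\rb + f(x) \le f(y)$ for all $y\in\calH$, and since the inner-product term vanishes this is just $f(x)\le f(y)$ for all $y\in\calH$. Hence $x$ minimizes $f$, i.e. $x\in\argmin f$.

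I do not expect a genuine obstacle: the result is a direct translation between the variational inequality characterizing a subgradient and the definition of a global minimizer. The only point deserving a line of care is that $f$ is extended-real-valued, so one must make sure the function values involved are finite where needed — which is supplied by properness together with Proposition \ref{prop:dom_grad}. (Alternatively, one can avoid mentioning finiteness altogether by observing that if $f(x)=+\infty$ then both sides of the relevant statements fail simultaneously, but invoking Proposition \ref{prop:dom_grad} is cleaner.)
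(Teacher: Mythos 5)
Your proof is correct: the paper states this result only as a citation to \cite[Thm.\ 16.2]{combettes_book} and gives no proof of its own, and your argument --- unwinding the subgradient inequality with $u=0$ to recover $f(x)\le f(y)$ for all $y$, with properness and Proposition \ref{prop:dom_grad} handling the finiteness of $f(x)$ --- is exactly the standard one-line derivation found in that reference. Nothing is missing.
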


\begin{definition}
The function $f\in\Gamma_0(\calH)$ is called $(p,L)$-{\em smooth}, 
where $L>0, p\in(0,1]$, if 
$f$ is Fr{\' e}chet differentiable on $\calH$ with a H{\" o}lder continuous gradient,
\begin{align*}
&&&& \|\nabla f(x) - \nabla f(y) \| \leq
L \, \|x-y\|^p  && \forall x,y \in \calH .
\end{align*}
\end{definition}

\noindent The case $p=1$ corresponds to functions with Lipschitz continuous gradient,
which appear frequently in optimization. 
The following lemma is sometimes called the {\em descent lemma}
\cite[Cor. 18.14]{combettes_book}.
\begin{lemma}
If the function $f\in\Gamma_0(\calH)$ is $(p,L)$-smooth then
\begin{align}
&& f(x) \leq f(y) + \lb x-y, \nabla f(y) \rb + \frac{L}{p+1} \|x-y\|^{p+1}
&& \forall x,y \in \calH.
\label{eq:descent}
\end{align}
\label{lem:descent}
\end{lemma}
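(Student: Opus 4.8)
The plan is to reduce the inequality to a one-dimensional calculation along the segment joining $y$ to $x$, and then apply the H\"older bound on the gradient termwise inside an integral.

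First I would fix $x,y\in\calH$ and introduce the real function $\phi:[0,1]\to\R$ defined by $\phi(t) = f\bigl(y + t(x-y)\bigr)$. Since $f$ is Fr\'echet differentiable on all of $\calH$, the chain rule gives that $\phi$ is differentiable with $\phi'(t) = \lb \nabla f(y+t(x-y)),\, x-y\rb$. The $(p,L)$-smoothness of $f$ forces $\nabla f$ to be continuous, so $t\mapsto\phi'(t)$ is continuous on $[0,1]$ and the fundamental theorem of calculus applies:
\[
f(x) - f(y) = \phi(1) - \phi(0) = \int_0^1 \lb \nabla f(y+t(x-y)),\, x-y\rb \, dt \;.
\]

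Next I would subtract the constant $\lb \nabla f(y),\, x-y\rb = \int_0^1 \lb \nabla f(y),\, x-y\rb\, dt$ from both sides, obtaining
\[
f(x) - f(y) - \lb \nabla f(y),\, x-y\rb = \int_0^1 \lb \nabla f(y+t(x-y)) - \nabla f(y),\, x-y\rb\, dt \;,
\]
and then estimate the integrand by Cauchy--Schwarz followed by the H\"older continuity of $\nabla f$:
\[
\lb \nabla f(y+t(x-y)) - \nabla f(y),\, x-y\rb \leq \|\nabla f(y+t(x-y)) - \nabla f(y)\|\,\|x-y\| \leq L\,\|t(x-y)\|^p\,\|x-y\| = L\,t^p\,\|x-y\|^{p+1} \;.
\]
Integrating $L\,t^p\,\|x-y\|^{p+1}$ over $t\in[0,1]$ gives $\frac{L}{p+1}\|x-y\|^{p+1}$, which is exactly \eqref{eq:descent}.

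The only genuinely delicate point is the passage to the one-dimensional problem: one must make sure the vector-valued map $t\mapsto\nabla f(y+t(x-y))$ is (Riemann or Bochner) integrable and that the scalar fundamental theorem of calculus is legitimately invoked for $\phi$; both follow from continuity of $\nabla f$, which is automatic from $(p,L)$-smoothness. Everything afterwards --- Cauchy--Schwarz, the H\"older estimate, and evaluating $\int_0^1 t^p\,dt$ --- is routine. If one prefers to sidestep vector-valued integration entirely, the same conclusion follows by applying the scalar fundamental theorem of calculus directly to the real function $\psi(t) = f(y+t(x-y)) - t\,\lb \nabla f(y),\, x-y\rb$ and bounding $\psi'(t) = \lb \nabla f(y+t(x-y)) - \nabla f(y),\, x-y\rb$ as above.
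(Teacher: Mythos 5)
Your proof is correct: the reduction to the one-dimensional function $\phi(t)=f(y+t(x-y))$, the fundamental theorem of calculus, and the Cauchy--Schwarz plus H\"older estimate $\|\nabla f(y+t(x-y))-\nabla f(y)\|\leq L\,t^p\|x-y\|^p$ followed by $\int_0^1 t^p\,dt=\frac{1}{p+1}$ give exactly \eqref{eq:descent}. The paper itself offers no proof of Lemma \ref{lem:descent} and simply cites \cite[Cor.~18.14]{combettes_book}; your argument is the standard one underlying that reference (extended from the Lipschitz case $p=1$ to general $p$), so it fills the omitted proof by essentially the expected route, with the integrability of $t\mapsto\nabla f(y+t(x-y))$ correctly justified by continuity of the gradient.
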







\begin{theorem}
Let $\omega \in\Gamma_0(\calH)$. Then $\dom \omega$ is contained in
the ball of radius $\rho\in \R_+$, if and only if the convex conjugate
$\omega^*$ is $\rho$-Lipschitz continuous on $\calH$.
\label{thm:lip}
\end{theorem}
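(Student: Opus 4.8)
The plan is to prove the two implications separately, using the definition of $\omega^*$ as a supremum for the forward direction and the Fenchel--Moreau biconjugation theorem (Theorem \ref{thm:fenchel}) for the reverse direction.

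For the ``only if'' direction, suppose $\dom\omega$ is contained in the closed ball $B=\{x\in\calH:\|x\|\le\rho\}$. Then for any $u\in\calH$ the supremum defining $\omega^*(u)$ may be restricted to $\dom\omega$, i.e. $\omega^*(u)=\sup\{\lb u,x\rb-\omega(x):x\in\dom\omega\}$. Fix $u,v\in\calH$. For every $x\in\dom\omega$ I would write $\lb u,x\rb-\omega(x)=\big(\lb v,x\rb-\omega(x)\big)+\lb u-v,x\rb\le\omega^*(v)+\|u-v\|\,\|x\|\le\omega^*(v)+\rho\|u-v\|$, using Cauchy--Schwarz and $\|x\|\le\rho$. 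Taking the supremum over $x\in\dom\omega$ gives $\omega^*(u)\le\omega^*(v)+\rho\|u-v\|$, and by symmetry $|\omega^*(u)-\omega^*(v)|\le\rho\|u-v\|$ whenever both values are finite. To conclude that $\omega^*$ is genuinely $\rho$-Lipschitz on all of $\calH$ it then remains to note that $\omega^*$ is real-valued: by Theorem \ref{thm:fenchel} it belongs to $\Gamma_0(\calH)$, hence is proper and finite at some $v_0$, and then the displayed inequality forces $\omega^*(u)\le\omega^*(v_0)+\rho\|u-v_0\|<+\infty$ for every $u$, while $\omega^*(u)>-\infty$ since $\omega$ is proper.

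For the ``if'' direction, suppose $\omega^*$ is $\rho$-Lipschitz continuous (in particular finite-valued) on $\calH$. By Fenchel--Moreau, $\omega=\omega^{**}$, so $\omega(x)=\sup\{\lb u,x\rb-\omega^*(u):u\in\calH\}$ for every $x$. The Lipschitz hypothesis yields $\omega^*(u)\le\omega^*(0)+\rho\|u\|$ for all $u$. Suppose, for contradiction, that some $x\in\dom\omega$ has $\|x\|>\rho$, and test the supremum along the ray $u_t=t\,x/\|x\|$, $t>0$: then $\lb u_t,x\rb-\omega^*(u_t)\ge t\|x\|-\omega^*(0)-\rho t=t(\|x\|-\rho)-\omega^*(0)$, which tends to $+\infty$ as $t\to+\infty$. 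Hence $\omega(x)=+\infty$, contradicting $x\in\dom\omega$. Therefore $\dom\omega\subseteq B$.

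I do not expect a serious obstacle; the only point requiring care is the bookkeeping of extended-real values --- namely checking that $\omega^*$ is finite everywhere in the first direction (so that the word ``Lipschitz'' applies on all of $\calH$), and being explicit that the Lipschitz assumption in the second direction is used as a bound valid on all of $\calH$. An alternative route for the ``if'' direction would use the subdifferential characterization (Theorems \ref{thm:duality} and \ref{thm:fenchel} together with Proposition \ref{prop:dom_grad}): a $\rho$-Lipschitz convex function has all its subgradients of norm at most $\rho$, and via $(\partial\omega^*)^{-1}=\partial\omega$ this places $\dom\partial\omega$ inside $B$; but that approach additionally requires a density argument to pass from $\dom\partial\omega$ to $\dom\omega$, so the biconjugation argument above is the cleaner one.
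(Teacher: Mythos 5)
Your proof is correct. Note, however, that the paper does not actually prove Theorem \ref{thm:lip}: its ``proof'' consists of a citation to Rockafellar's 1966 paper and to \cite[Cor.~13.3.3]{rockafellar} for the finite-dimensional case. Your argument is essentially the classical one behind those references, written out self-containedly in the Hilbert space setting: the ``only if'' direction is the elementary estimate obtained by perturbing the linear term inside the supremum defining $\omega^*$ and applying Cauchy--Schwarz on the bounded set $\dom\omega$, and the ``if'' direction uses biconjugation to show that any point outside the $\rho$-ball makes the supremum defining $\omega^{**}(x)=\omega(x)$ blow up along a ray. Both steps are sound, and you correctly handle the two extended-real bookkeeping issues (finiteness of $\omega^*$ everywhere in the forward direction, via properness of $\omega^*$ and the one-sided Lipschitz inequality; and the global validity of the affine upper bound $\omega^*(u)\le\omega^*(0)+\rho\|u\|$ in the reverse direction). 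Your remark that the subdifferential route would require an extra density argument to pass from $\dom\partial\omega$ to $\dom\omega$ is also accurate; the biconjugation argument avoids that entirely. In short, you have supplied a complete proof where the paper only supplies a pointer.
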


\begin{proof}
See \cite{rockafellar_level}, or \cite[Cor. 13.3.3]{rockafellar}
for a finite-dimensional version.
\end{proof}

\begin{corollary}
If the function $\omega \in\Gamma_0(\calH)$ has bounded domain then
$\partial \omega^*$ is nonempty everywhere on $\calH$.
\label{cor:lip}
\end{corollary}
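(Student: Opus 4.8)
The plan is to read the result straight off Theorem~\ref{thm:lip}. First I would fix $\rho\in\R_+$ with $\dom\omega$ contained in the ball of radius $\rho$ (possible by hypothesis) and apply that theorem, obtaining that $\omega^*$ is $\rho$-Lipschitz continuous on all of $\calH$. In particular $\omega^*$ is finite-valued and norm-continuous at every point, so $\dom\omega^*=\calH$, and by the Fenchel--Moreau theorem (Theorem~\ref{thm:fenchel}) $\omega^*\in\Gamma_0(\calH)$.

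The only remaining ingredient is the classical fact that a function in $\Gamma_0(\calH)$ that is continuous at a point $x$ is subdifferentiable there, i.e. $\partial\omega^*(x)\neq\varnothing$ (see, e.g., \cite{combettes_book}; one proves it by separating $(x,\omega^*(x))$ from the interior of $\operatorname{epi}\omega^*$, which is nonempty precisely because $\omega^*$ is continuous). Since $\omega^*$ is continuous at every $x\in\calH$, this gives $\partial\omega^*(x)\neq\varnothing$ for all $x$, which is the assertion.

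A more self-contained alternative avoids quoting the continuity criterion and instead uses the inverse rule $(\partial\omega)^{-1}=\partial\omega^*$ from Theorem~\ref{thm:duality} together with $\omega^{**}=\omega$. Under this identification $\dom(\partial\omega^*)=\range(\partial\omega)$, so it suffices to show that for every $u\in\calH$ there is an $x$ with $u\in\partial\omega(x)$; by Fermat's rule (Theorem~\ref{thm:fermat}) this amounts to showing that the tilted function $\omega-\lb u,\cdot\rb$ attains its infimum. That function is proper, lsc and convex with the same bounded domain as $\omega$, and its infimum equals $-\omega^*(u)>-\infty$ because $\omega^*$ is finite-valued; hence any minimizing sequence is bounded, and — using that $\calH$, being a Hilbert space, is reflexive (so bounded sequences have weakly convergent subsequences) and that a function in $\Gamma_0(\calH)$ is weakly lower semicontinuous — one extracts a weak limit that realizes the minimum.

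The one genuine obstacle is precisely the convex-analytic input invoked once in either route: subdifferentiability at a point of continuity (first route), or weak compactness of bounded closed convex subsets of $\calH$ together with weak lower semicontinuity of $\Gamma_0(\calH)$ functions (second route). Both are standard and I would simply cite \cite{combettes_book} for whichever is used; the rest is routine bookkeeping — verifying that boundedness of $\dom\omega$ is inherited by $\dom(\omega-\lb u,\cdot\rb)$ and that the Lipschitz conclusion of Theorem~\ref{thm:lip} indeed makes $\omega^*$ real-valued and continuous on all of $\calH$.
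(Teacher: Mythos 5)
Your first route is exactly the paper's proof: the paper deduces from Theorem~\ref{thm:lip} that $\omega^*$ is $\rho$-Lipschitz, hence continuous, on all of $\calH$, and then invokes \cite[Prop.~16.17]{combettes_book}, which is precisely the ``subdifferentiable at every point of continuity'' fact you cite. Your second, self-contained route via $(\partial\omega)^{-1}=\partial\omega^*$ and weak compactness of minimizing sequences is also correct, but it is not what the paper does.
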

\begin{proof}
Follows from Theorem \ref{thm:lip} and \cite[Prop. 16.17]{combettes_book}.
\end{proof}


\section{Generalized Conditional Gradient Algorithm}
\label{sec:frank_wolfe}

In this section, we review briefly the conditional gradient
algorithm in one of its many formulations. We focus on convex optimization
problems of a general type and discuss how a generalized conditional gradient
algorithm applies to such problems. We should note that this algorithm is
not the most generic formulation that has been studied -- see, for example, \cite{lan,demyanov2}-- 
but it covers a broad variety of optimization problems in machine learning.

Specifically, we consider the optimization problem
\begin{equation}
\min\left\{
f(x) + \omega(x) : x \in \calH
\right\}
\label{eq:opt}
\end{equation}
where we make the following assumptions.

\begin{assumption}
\label{ass:main}
\begin{itemize}
\item[]
\item $f,\omega \in\Gamma_0(\calH)$
\item $f$ is $(1,L)$-smooth
\item $\dom \omega$ is bounded, that is, there exists $\rho\in\R_{++}$ such that
$\|x\|\leq \rho,\forall x\in\dom \omega$.
\end{itemize}
\end{assumption}

\begin{remark}
Under Assumption \ref{ass:main}, the problem \eqref{eq:opt} admits a minimizer. The reason is that,
since  $\dom \omega$ is bounded, $\lim\limits_{\|x\|\to+\infty} \frac{f(x) + \omega(x)}{\|x\|} = +\infty$
(supercoercivity).
\label{rem:minimizer}
\end{remark}

\noindent The Fenchel dual problem 
associated with \eqref{eq:opt} is 
\begin{equation}
\max\left\{
-f^*(-z) - \omega^*(z) : z \in \calH
\right\} \;.
\label{eq:dual}
\end{equation}
Due to Fenchel's duality theorem and the fact that $\dom f = \calH$, the duality gap equals zero
and the maximum in \eqref{eq:dual} is attained \cite[Thm. 15.23]{combettes_book}.

\begin{algorithm}[t]
\caption{Generalized conditional gradient algorithm.}
\begin{algorithmic}
\STATE {\bf Input}~ $x_1 \in \dom \omega$
\FOR {$k=1,2,\dots$}
\STATE $\begin{aligned} & y_{k} \leftarrow ~\text{an element of}~\partial\omega^*\left(-\nabla f(x_k)\right) 
  && && && \text{(I)}  \\
 & x_{k+1} \leftarrow  (1-\alpha_k)x_k + \alpha_k y_k 
  && && && \text{(II)} \end{aligned}$
\ENDFOR 
\end{algorithmic}
\label{alg:fw}
\end{algorithm}

Algorithm \ref{alg:fw} has been used frequently in the past 
for solving problems of the type \eqref{eq:opt}.
It is a generalization of algorithms such
as the Frank-Wolfe algorithm for quadratic programming
\cite{frank_wolfe,gilbert} and conditional gradient algorithms \cite{kumar,demyanov,dunn}.
Algorithm \ref{alg:fw} applies to the general setting of convex
optimization problems of the form \eqref{eq:opt} which satisfy
Assumption \ref{ass:main}. In such general forms, the algorithm has been known and studied
for a long time in control theory and several of its convergence properties have been obtained \cite{kelley,kumar,dunn,demyanov,dunn2}.
More recently, interest in the family of conditional
gradient algorithms has been revived, especially in
theoretical computer science, machine learning, computational geometry
and elsewhere 
\cite{kale_hazan,hazan,jaggi,bach_fw,garber,tewari_greedy,ying_li,jaggi_arxiv,clarkson,harchaoui,lacoste,grigas}.
Some of these algorithms have appeared independently in various fields, such as statistics and signal processing, 
under different names and various guises. For example, it has been observed that conditional gradient methods
are related to boosting, greedy methods for sparse problems 
\cite{clarkson,greedy} and to orthogonal matching pursuit \cite{revisiting,jaggi_arxiv}.
Some very recent papers \cite{bach_fw,steinhardt} show an equivalence to the optimization method of 
mirror descent, which we discuss briefly in Section \ref{sec:connections}.

One reason for the popularity and the revival of interest in conditional gradient methods
has been their applicability to large scale problems. This advantage is evident, for example,
in comparison to proximal methods -- see \cite{combettes_pesquet} and references therein --
and especially in optimization problems involving matrices. 
Conditional gradient methods generally trade off a slower convergence rate (number of iterations)
for lower complexity of each iteration step. The accelerated 
proximal gradient methods \cite{nesterov07} benefit from the ``optimal'' 
$\calO\left(\sqrt{\frac{1}{\varepsilon}}\right)$ rate (where $\varepsilon$ is the accuracy
with respect to the optimization objective), whereas conditional gradient methods
exhibit a slower $\calO\left(\frac{1}{\varepsilon}\right)$ rate. 
On the other side, each step in the proximal methods requires computation of the
{\em proximity operator} \cite{moreau65,combettes_book} (see Section \ref{sec:hybrid}), 
which in some cases can be particularly costly. For example, the 
proximity operator of the {\em trace norm} of a matrix $X\in\R^{d\times n}$, 
\beq
\|X\|_{tr} = \sum_{i=1}^{\min\{d,n\}}\sigma_i(X) \;,
\eeq
where $\sigma_i(X)$ denote the singular values of $X$, 
requires computation of a complete singular value decomposition. In contrast, 
a conditional gradient method need only compute a dominant pair of left-right singular vectors, 
and such a computation scales better to large matrices \cite{jaggi}.

In general, as Algorithm \ref{alg:fw} indicates,
conditional gradient methods require computation of {\em dual subgradients}. Often,
this is a much less expensive operation than projection or the proximity operation. 
In other cases, proximity operations may not be feasible in a finite number of steps,
whereas dual subgradients are easy to compute. An obvious such case is $\ell_p$ or 
Schatten-$\ell_p$ regularization -- see \cite{revisiting}.
Other cases of interest occur when $\omega$ is the conjugate of a {\em max-function}.
Then the dual subgradient could be fast to compute while the proximity operation may be complex. 

Finally, another advantage of conditional gradient methods is that they build their estimate
of the solution incrementally. This implies that, in earlier iterations, time and space costs
will be low and that the algorithm may be stopped once an estimate of the desired parsimony is 
obtained (this could be, for example, a vector of certain sparsity or a matrix of certain rank).
Proximal methods, in contrast, do not necessarily obtain the desired parsimony until
later iterations (and even then it is not ``exact'').  

Formulation \eqref{eq:opt} covers many optimization problems studied so far
in the conditional gradients literature
and provides a concise description of variational problems
amenable to the standard conditional gradient algorithm. 
In Section \ref{sec:hybrid} we extend the applicability 
to problems with multiple penalties, 
by combining conditional gradients and smoothing techniques.

We remark that formulation \eqref{eq:opt} is valid in a generalized Hilbert space setting, 
so that it can be applied to infinite dimensional problems. This is particularly useful for {\em kernel methods}
in machine learning, for example, kernelized support vector machines or structured SVMs \cite{JST}
and nuclear or Schatten-$\ell_p$ regularization of operators \cite{abernethy}. 

To motivate Algorithm \ref{alg:fw}, consider the convex optimization problem \eqref{eq:opt}.
By Theorems \ref{thm:grad_sum} and \ref{thm:fermat}, 
$\xh\in \calH$ is a minimizer of \eqref{eq:opt} if and only if
\beq
0 \in \nabla f(\xh) + \partial \omega(\xh) 
\eeq
or, equivalently, 
\begin{equation}
- \nabla f(\xh) \in \partial \omega(\xh) \iff 
\xh \in \partial \omega^*(- \nabla f(\xh) ) \;,
\label{eq:fp}
\end{equation}
where we have used Theorem \ref{thm:duality}.
Thus, step (I) in Algorithm \ref{alg:fw} reflects the
fixed point equation \eqref{eq:fp}.
However, $\partial \omega^*(-\nabla f(x_k))$ is not a singleton in general 
and some elements of this set may be far from the minimizers of the problem. Hence step (II),
which weighs the new estimate with past ones, is necessary. 
With any affine weighting like that of step (II), the fixed point equation
\eqref{eq:fp} still holds.

\begin{remark}
Algorithm \ref{alg:fw} is {\em well defined},
since the subdifferential at step (I) is always nonempty, due to
Corollary \ref{cor:lip} and Assumption \ref{ass:main}.
\label{rem:well}
\end{remark}

Finally, let us note that several variants of Algorithm \ref{alg:fw}
are possible, in the spirit of the extensive literature on conditional
gradient methods. For example, there are various techniques (like line search) for
the choice of coefficients $\alpha_k$, more of the past iterates
may be used in (II) and so on.


\subsection{Convergence Rate}
\label{sec:rate}

\begin{theorem}
If, for every $k\in\N$, $\alpha_k \in [0,1]$, then
$x_k \in \dom \omega$ and
\begin{align*}
f(x_{k+1}) + \omega(x_{k+1}) - f(x) -\omega(x)  &\leq 
(1-\alpha_k) \bigl( f(x_k) + \omega(x_k) -  f(x) -\omega(x) \bigr) \\
& + 2\alpha_k^2 L \rho^2  
\end{align*}
for every $x\in\dom \omega$, $k\in\N$.
\label{thm:rate}
\end{theorem}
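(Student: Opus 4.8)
The plan is to prove both assertions simultaneously by induction on $k$, the first (feasibility) feeding into the diameter bound that the second (the one-step recursion) requires. For feasibility: $x_1\in\dom\omega$ by the choice of input, and if $x_k\in\dom\omega$ then step (I) is well defined by Corollary \ref{cor:lip} and Assumption \ref{ass:main}, and any $y_k\in\partial\omega^*(-\grad f(x_k))$ satisfies $-\grad f(x_k)\in\partial\omega(y_k)$ by Theorem \ref{thm:duality}, hence $y_k\in\dom\omega$ by Proposition \ref{prop:dom_grad}. Since $\dom\omega$ is convex and $\alpha_k\in[0,1]$, the convex combination $x_{k+1}=(1-\alpha_k)x_k+\alpha_k y_k$ lies in $\dom\omega$; in particular $\|x_k\|\le\rho$ and $\|y_k\|\le\rho$ for every $k$.

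Now fix $x\in\dom\omega$ and $k\in\N$, and set $d_k:=y_k-x_k$, so that $x_{k+1}-x_k=\alpha_k d_k$ and $\|d_k\|\le 2\rho$. Applying the descent lemma (Lemma \ref{lem:descent} with $p=1$) to $f$ and then bounding $\|d_k\|^2\le 4\rho^2$ gives
$$f(x_{k+1})\le f(x_k)+\alpha_k\langle d_k,\grad f(x_k)\rangle+\tfrac{L}{2}\alpha_k^2\|d_k\|^2\le f(x_k)+\alpha_k\langle d_k,\grad f(x_k)\rangle+2L\rho^2\alpha_k^2,$$
while convexity of $\omega$ yields $\omega(x_{k+1})\le(1-\alpha_k)\omega(x_k)+\alpha_k\omega(y_k)$.

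The key step — and the only place where the specific choice of $y_k$ in step (I) matters — is to control the quantity $\langle d_k,\grad f(x_k)\rangle+\omega(y_k)$. By Theorem \ref{thm:duality}, $y_k\in\partial\omega^*(-\grad f(x_k))$ is equivalent to $-\grad f(x_k)\in\partial\omega(y_k)$, i.e. $y_k$ is an exact minimizer of the linearized subproblem $u\mapsto\omega(u)+\langle\grad f(x_k),u\rangle$; written through the subgradient inequality this says $\omega(y_k)+\langle\grad f(x_k),y_k-x_k\rangle\le\omega(x)+\langle\grad f(x_k),x-x_k\rangle$. Combining this with the gradient inequality for the convex function $f$, $\langle\grad f(x_k),x-x_k\rangle\le f(x)-f(x_k)$, gives $\langle d_k,\grad f(x_k)\rangle+\omega(y_k)\le f(x)-f(x_k)+\omega(x)$. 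Substituting this into the two displayed bounds above, grouping the $(1-\alpha_k)$-weighted $x_k$ terms against the $\alpha_k$-weighted $x$ terms, and subtracting $f(x)+\omega(x)$ from both sides produces exactly the claimed recursion with remainder $2\alpha_k^2 L\rho^2$. I expect the main obstacle to be precisely this key step: recognizing that the dual subgradient $y_k$ solves the linearized problem and translating the abstract relation $y_k\in\partial\omega^*(-\grad f(x_k))$ into the usable inequality via Theorem \ref{thm:duality}; everything else is the descent lemma, convexity of $\omega$, and the bound $\operatorname{diam}(\dom\omega)\le 2\rho$.
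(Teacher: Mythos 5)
Your proof is correct and follows essentially the same route as the paper: the paper omits an explicit proof of Theorem \ref{thm:rate}, but its proof of the generalized Theorem \ref{thm:hybrid} uses exactly your three ingredients — the descent lemma, the translation of $y_k\in\partial\omega^*(-\nabla f(x_k))$ via Theorem \ref{thm:duality} into the linear-minimization inequality combined with convexity of $f$, and convexity of $\omega$ applied to the update — and your version is the specialization to $g=0$, $p=1$. The feasibility induction and the constant $2\alpha_k^2L\rho^2$ (from $\frac{L}{2}\alpha_k^2(2\rho)^2$) both check out.
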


Theorem \ref{thm:rate} implies an $\mathcal{O}\left(\tfrac{1}{k}\right)$
convergence rate with respect to the objective values
$f(x_{k}) + \omega(x_{k}) - f(\xh) -\omega(\xh)$,
where $\xh$ is a minimizer of \eqref{eq:opt}.
This rate can be attained, for example, with the choice
$\alpha_k =\tfrac{2}{k+1}$. 

\begin{corollary}
If $\alpha_k = \frac{2}{k+1}$, for every $k\in\N$, then 
\begin{align}
f(x_{k+1}) + \omega(x_{k+1}) - f(x) -\omega(x)  &\leq 
\frac{8L\rho^2}{k+1}
\label{eq:rate1}
\end{align}
for every $x\in\dom \omega$, $k\in\N$.
\label{cor:rate1}
\end{corollary}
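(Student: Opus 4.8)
The plan is to obtain Corollary~\ref{cor:rate1} from Theorem~\ref{thm:rate} by the classical telescoping/induction argument used for conditional gradient rates. First I would fix an arbitrary $x\in\dom\omega$ and set $h_k := f(x_k)+\omega(x_k)-f(x)-\omega(x)$. Note each $h_k$ is a finite real number: Theorem~\ref{thm:rate} already asserts $x_k\in\dom\omega$ (so $\omega(x_k)<+\infty$), $x\in\dom\omega$ gives $\omega(x)<+\infty$, and $\dom f=\calH$ makes the $f$-terms finite. Since $\alpha_k=\tfrac{2}{k+1}\in[0,1]$, Theorem~\ref{thm:rate} applies and yields, for every $k\in\N$, the one-step recursion
\[
h_{k+1}\;\le\;\Bigl(1-\tfrac{2}{k+1}\Bigr)h_k+2\Bigl(\tfrac{2}{k+1}\Bigr)^2L\rho^2\;=\;\frac{k-1}{k+1}\,h_k+\frac{8L\rho^2}{(k+1)^2}\,.
\]

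Next I would prove, by induction on $k\ge1$, the claim $h_{k+1}\le \tfrac{8L\rho^2}{k+1}$. For the base case $k=1$ the factor $\tfrac{k-1}{k+1}$ vanishes, so the recursion gives $h_2\le \tfrac{8L\rho^2}{4}=2L\rho^2\le\tfrac{8L\rho^2}{2}$. For the inductive step, assume $h_k\le\tfrac{8L\rho^2}{k}$; since $\tfrac{k-1}{k+1}\ge0$ the recursion gives
\[
h_{k+1}\;\le\;\frac{k-1}{k+1}\cdot\frac{8L\rho^2}{k}+\frac{8L\rho^2}{(k+1)^2}\;=\;8L\rho^2\,\frac{(k-1)(k+1)+k}{k(k+1)^2}\;=\;8L\rho^2\,\frac{k^2+k-1}{k(k+1)^2}\,,
\]
and since $k^2+k-1\le k^2+k=k(k+1)$ the right-hand side is at most $\tfrac{8L\rho^2}{k+1}$, closing the induction. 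Because $x\in\dom\omega$ was arbitrary, this is exactly \eqref{eq:rate1}.

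All the manipulations are routine, so I do not anticipate a genuine obstacle. The only points needing a moment of attention are (i) verifying $\alpha_k\in[0,1]$ so that Theorem~\ref{thm:rate} is applicable, and (ii) keeping track of the fact that $x$ need not be a minimizer, so $h_k$ may be negative — this is harmless here because the recursion is multiplied only by the nonnegative factor $1-\alpha_k$, and it is precisely this structure together with the value of the constant $8L\rho^2$ that makes the induction close via the elementary inequality $k^2+k-1\le k(k+1)$.
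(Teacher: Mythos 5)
Your proof is correct: the recursion from Theorem~\ref{thm:rate} with $\alpha_k=\tfrac{2}{k+1}$, the base case killed by $\alpha_1=1$, and the induction closed via $k^2+k-1\le k(k+1)$ all check out, and you rightly note that possible negativity of $h_k$ is harmless since it is multiplied by the nonnegative factor $1-\alpha_k$. The paper itself omits the proof of Corollary~\ref{cor:rate1}, deferring to the literature, and your argument is precisely the standard telescoping/induction derivation from Theorem~\ref{thm:rate} that those references (and the analogous Corollary~\ref{cor:hybrid} later in the paper) employ.
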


See \cite{demyanov,dunn,jaggi,clarkson,bach_fw} and references therein for these and related results,
as well as for bounds involving the duality gap estimates. 
It is also known that the lower bound for conditional gradient and similar algorithms 
is of the same order \cite{canon,revisiting,lan}.


\subsection{Connections to Mirror Descent and Gradient Descent}
\label{sec:connections}

It has been observed recently \cite{bach_fw,steinhardt} that the conditional gradient algorithm
is equivalent to a {\em mirror descent} algorithm in the dual.
The basic mirror descent algorithm \cite{mirror_descent,nemirovski,opt}
may be written as the iteration
\begin{equation}
x_{k+1} \leftarrow ~\text{an element of}~ x_k - t_k \partial \varphi(\nabla \psi^*(x_k)) \;,
\label{eq:mirror}
\end{equation}
where $t_k>0$ are step sizes, $\psi$ is strongly convex 
on a closed convex set $C$ and $\varphi$ is 
convex and Lipschitz continuous on $C$.
Setting $\omega = (\varphi\circ (-I))^*$, where $I$ denotes the identity operator, and $f=\psi^*$, algorithm
\eqref{eq:mirror} rewrites as a variant of Algorithm \ref{alg:fw}
(in which the update is not a convex combination). The set $C$ can
be viewed as the domain of $\omega$.

Consequently, when \eqref{eq:opt} is a proximity computation
(that is, when $f=\frac{1}{2\beta}\|\cdot\|^2$, $\beta>0$)
the conditional gradient algorithm \ref{alg:fw}
is equivalent to a {\em subgradient descent} in the dual.
In such cases $\nabla f = \frac{1}{\beta}I$ and Algorithm \ref{alg:fw}
becomes 
\beq
x_{k+1} \in (1-\alpha_k)x_k + \alpha_k \partial \omega^*\left(-\tfrac{1}{\beta}x_k\right) \;.
\eeq
Letting $h = \left(\omega^*\circ \left(-\tfrac{1}{\beta}I\right)\right)^*$, 
by the chain rule (Theorem \ref{thm:grad_sum}) this iteration is equivalent to 
\beq
x_{k+1} \in x_k - \alpha_k (I + \beta\, \partial h^*) (x_k) \;.
\eeq
In particular, when $\omega$ is $\mu$-strongly convex 
(and hence $\omega^*$ is (1,$\frac{1}{\mu}$)-smooth)
and $\alpha_k \leq \frac{\mu\beta}{1+\mu\beta}$ for every $k\in\N$, the above iteration
is equivalent to a {\em proximal point} algorithm
\cite{combettes_pesquet,eckstein,proximal_point}
because of \cite[Thm. 18.15]{combettes_book}.
Note that not all cases of subgradient descent are covered, since $\omega$ should have bounded domain,
implying that the dual objective function
should be a quadratic perturbation of a Lipschitz continuous function.


\section{Hybrid Conditional Gradient - Smoothing Algorithm}
\label{sec:hybrid}

We now introduce Algorithm \ref{alg:hybrid}, an extension of  
conditional gradient methods to optimization problems on bounded domains
which contain smooth and Lipschitz continuous terms.

\subsection{Description of the Hybrid Algorithm}
\label{sec:hybrid_algo}

Formally, we consider the class of optimization problems of the form
\begin{equation}
\min\left\{
f(x) + g(Ax) + \omega(x) : x\in \calH
\right\}
\label{eq:hybrid}
\end{equation}
where we make the following assumptions:

\begin{assumption}
\label{ass:hybrid}
\begin{itemize}
\item[]
\item $f,\omega \in\Gamma_0(\calH)$
\item $g \in\Gamma_0(\calK)$, $\calK$ is a Hilbert space
\item $A:\calH\to\calK$ is a bounded linear operator
\item $f$ is $(p,L_f)$-smooth
\item $g$ is $L_g$-Lipschitz continuous on $\calK$
\item $\dom \omega$ is bounded, that is, there exists $\rho\in\R_{++}$ such that
$\|x\|\leq \rho,\forall x\in\dom \omega$
\end{itemize}
\end{assumption}

\begin{remark}
Under Assumption \ref{ass:hybrid}, problem \eqref{eq:hybrid} admits a minimizer. 
As in Remark \ref{rem:minimizer}, the reason is growth of the objective function at infinity
(the objective equals $+\infty$ outside the feasible set, which is bounded).
\end{remark}

\noindent In order for the algorithm to be practical, we require that
\begin{assumption}
\label{ass:oracle}
\bi
\item[]
\item the gradient of $f$ is simple to compute at every $x\in\calH$,
\item a subgradient of $\omega^*$ is simple to compute at every $x\in\calH$,
\item the proximity operator of $\beta g$ is simple to compute
for every $\beta > 0,x\in\calH$.
\ei
\end{assumption}
\noindent The {\em proximity operator} was introduced by Moreau \cite{moreau65} as the (unique) minimizer 
\beq
\prox_{g}(x) = \argmin\left\{ \frac{1}{2}\|x-u\|^2 + g(u)
: u\in\calH\right\} \;.
\label{eq:prox}
\eeq 
For a review of the numerous applications of proximity operators to 
optimization, see, for example, \cite{combettes_wajs,combettes_pesquet}
and references therein. 

The following are some examples of optimization problems that belong to the
general class \eqref{eq:hybrid}. 
\begin{example} Regularization with two norm penalties:
\beq
\min \left \{ f(x) + \lambda \, \|x\|_a : \|x\|_b \leq B, x\in\R^d \right \} 
\label{eq:norms}
\eeq
where $f$ is $(p,L_f)$-smooth, $\lambda>0$ and $\|\cdot\|_a, \|\cdot\|_b$ can be any
norms on $\R^d$.
\label{ex:norms}
\end{example}

\begin{example} Regularization with a linear composite penalty and a norm:
\beq
\min \left \{ f(x) + \lambda \, \|Ax\|_a : \|x\|_b \leq B, x\in\R^d \right \} 
\label{eq:composite}
\eeq
where $f$ is $(p,L_f)$-smooth, $\lambda>0$, $\|\cdot\|_a, \|\cdot\|_b$ are
norms on $\R^\delta$, $\R^d$, respectively, and $A\in \R^{\delta\times d}$.
\label{ex:composite}
\end{example}

\begin{example} Regularization with multiple linear composite penalties and a norm:
\beq
\min \left \{ f(x) + \sum_{i=1}^n \lambda_i \, \|A_i x\|_{a_i} : \|x\|_b \leq B, x\in\R^d \right \} 
\label{eq:multicomposite}
\eeq
where $f$ is $(p,L_f)$-smooth and, for all $i\in\{1,\dots,n\}$, $\lambda_i>0$, $\|\cdot\|_{a_i}, \|\cdot\|_b$ are
norms on $\R^{\delta_i}$, $\R^d$, respectively, and $A_i\in \R^{{\delta_i}\times d}$.
Such problems can be seen as special cases of Example \ref{ex:composite} by applying the
classical direct sum technique, $\delta = \sum_{i=1}^n \delta_i$, $A = \left(\begin{smallmatrix}
A_1\\ \vdots \\ A_n \end{smallmatrix}\right)$, $\|(v_i)_{i=1}^n\|_a = \sum_{i=1}^n \lambda_i \, \|v_i\|_{a_i}$.
\label{ex:multicomposite}
\end{example}

\begin{algorithm}[t]
\caption{Hybrid conditional gradient - smoothing algorithm.}
\begin{algorithmic}
\STATE {\bf Input}~ $x_1 \in \dom \omega$
\FOR {$k=1,2,\dots$}
\STATE $\begin{aligned} & z_k \leftarrow - \nabla f(x_k) - \tfrac{1}{\beta_k} A^*A x_k 
+ \tfrac{1}{\beta_k} A^* \prox_{\beta_k g}(Ax_k)
  && && && \text{(I)}  \\
& y_{k} \leftarrow ~\text{an element of}~
\partial\omega^*(z_k) 
  && && && \text{(II)}  \\
 & x_{k+1} \leftarrow  (1-\alpha_k)x_k + \alpha_k y_k 
  && && && \text{(III)} \end{aligned}$
\ENDFOR 
\end{algorithmic}
\label{alg:hybrid}
\end{algorithm}

We propose to solve problems like the above with Algorithm \ref{alg:hybrid}.\footnote{
Algorithm \ref{alg:hybrid} is well-defined (see Remark \ref{rem:well}).}
We call it a {\em hybrid conditional gradient - smoothing} algorithm
(HCGS in short), because it involves a smoothing of function $g$ with parameter $\beta_k$. 
For any $\beta > 0$, the $\beta$-smoothing of $g$ is the Moreau envelope of $g$, that is, 
the function $g_\beta$ defined as
\begin{align}
&&&& g_\beta(x) := \min\left\{ \frac{1}{2\beta}\|x-u\|^2 + g(u) : u\in\calH\right\}
&& \forall x\in\calH \;.
\label{eq:moreau}
\end{align}
The function $g_\beta$ is a smooth approximation to $g$ (in
fact, the best possible approximation of $\frac{1}{\beta}$ smoothness), as summarized in
the following lemmas from the literature.

\begin{lemma}[Proposition 12.29 in \cite{combettes_book}]
\label{lem:moreau_grad}
Let $g\in\Gamma_0(\calH),\beta >0$. Then
$g_\beta$ is $(1,\frac{1}{\beta})$-smooth and its gradient can be obtained 
from the proximity operator of $g$ as: 
$\nabla g_\beta(x) = \frac{1}{\beta} \left(x - \prox_{\beta g}(x)\right)$.
\end{lemma}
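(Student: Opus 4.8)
The plan is to first identify $\nabla g_\beta$ through an optimality condition, and then extract both Fréchet differentiability and the Lipschitz bound from two elementary quadratic inequalities; the only nontrivial inputs needed from the excerpt are Fermat's rule (Theorem \ref{thm:fermat}) and the sum rule for subdifferentials (Theorem \ref{thm:grad_sum}, in the trivial case $A=I$, with the everywhere-finite quadratic playing the role of the composed function). Fix $\beta>0$ and set $p_x:=\prox_{\beta g}(x)$. First I would note that, after multiplying the objective in \eqref{eq:moreau} by $\beta$, the (unique, by strong convexity) minimizer defining $g_\beta(x)$ is exactly $p_x$, so $g_\beta(x)=\frac{1}{2\beta}\|x-p_x\|^2+g(p_x)$, and the optimality condition $0\in\frac{1}{\beta}(p_x-x)+\partial g(p_x)$ yields
\beq
\tfrac{1}{\beta}(x-p_x)\in\partial g(p_x)\;.
\eeq

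Next I would prove, for all $x,y\in\calH$, the squeeze
\[
\bigl\langle\tfrac{1}{\beta}(x-p_x),y-x\bigr\rangle\ \le\ g_\beta(y)-g_\beta(x)\ \le\ \bigl\langle\tfrac{1}{\beta}(x-p_x),y-x\bigr\rangle+\tfrac{1}{2\beta}\|y-x\|^2\;.
\]
The right-hand inequality comes from bounding $g_\beta(y)$ above by the value of its defining objective at the suboptimal point $u=p_x$ and expanding $\|y-p_x\|^2=\|y-x\|^2+2\langle y-x,x-p_x\rangle+\|x-p_x\|^2$. For the left-hand inequality I would minorize $g(u)$ inside the infimum defining $g_\beta(y)$ by the subgradient inequality for $g$ at $p_x$ with subgradient $\frac{1}{\beta}(x-p_x)$, and then minimize the resulting quadratic-plus-linear function of $u$ explicitly; the minimizer is $u=y-(x-p_x)$, and substituting returns precisely $g_\beta(x)+\langle\frac{1}{\beta}(x-p_x),y-x\rangle$.

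From the left inequality, $\frac{1}{\beta}(x-p_x)\in\partial g_\beta(x)$, so $g_\beta\in\Gamma_0(\calH)$ is everywhere subdifferentiable; the two inequalities together show that $g_\beta(x+h)-g_\beta(x)-\langle\frac{1}{\beta}(x-p_x),h\rangle$ lies between $0$ and $\frac{1}{2\beta}\|h\|^2$, hence is $o(\|h\|)$, and for a convex function this forces Fréchet differentiability at $x$ with $\nabla g_\beta(x)=\frac{1}{\beta}(x-p_x)=\frac{1}{\beta}(x-\prox_{\beta g}(x))$. For the Lipschitz estimate I would put $u=\nabla g_\beta(x)$, $v=\nabla g_\beta(y)$, so $p_x=x-\beta u$ and $p_y=y-\beta v$, and apply monotonicity of $\partial g$ to the pairs $(p_x,u)$ and $(p_y,v)$ (legitimate since $u\in\partial g(p_x)$, $v\in\partial g(p_y)$): this gives $\langle u-v,x-y\rangle\ge\beta\|u-v\|^2$, whence $\|u-v\|\le\frac{1}{\beta}\|x-y\|$ by Cauchy--Schwarz, i.e. $g_\beta$ is $(1,\frac{1}{\beta})$-smooth.

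I expect the main point requiring care to be the passage from the squeeze inequality to genuine \emph{Fréchet} (not merely directional or subgradient) differentiability — convexity is essential here, since a convex function trapped between an affine map and that map plus $o(\|h\|)$ must be differentiable — together with the mundane bookkeeping of the rescaling between the $\frac{1}{2\beta}\|\cdot\|^2$ in \eqref{eq:moreau} and the parameter $\beta$ in $\prox_{\beta g}$. A slicker but less self-contained alternative would view $g_\beta$ as the infimal convolution $g\,\Box\,\frac{1}{2\beta}\|\cdot\|^2$, observe that $(g_\beta)^*=g^*+\frac{\beta}{2}\|\cdot\|^2$ is $\beta$-strongly convex, and invoke the general duality between strong convexity of a function and Lipschitz continuity of the gradient of its conjugate; I would not take this route here, since the excerpt states neither the inf-convolution conjugation rule nor that duality.
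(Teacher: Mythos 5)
Your proof is correct. Note first that the paper itself offers no argument for this lemma: it is quoted verbatim from the literature (Proposition 12.29 of Bauschke--Combettes), so there is nothing in the text to compare against; what you have written is essentially the standard textbook derivation, carried out in full. The bookkeeping is right: the minimizer in \eqref{eq:moreau} is indeed $\prox_{\beta g}(x)$ after the rescaling, the optimality condition $\frac{1}{\beta}(x-p_x)\in\partial g(p_x)$ is legitimate by the sum rule since the quadratic has full domain, both halves of the squeeze check out (the upper bound by evaluating the defining objective at the suboptimal $u=p_x$, the lower bound by minorizing $g$ with its subgradient at $p_x$ and minimizing the resulting quadratic at $u=y-(x-p_x)$), and the firm-nonexpansiveness-style estimate $\langle u-v,x-y\rangle\ge\beta\|u-v\|^2$ from monotonicity of $\partial g$ gives the $\frac{1}{\beta}$-Lipschitz gradient via Cauchy--Schwarz. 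One small remark: the point you flag as requiring the most care is actually the easiest step in your own setup. Since your squeeze traps $g_\beta(x+h)-g_\beta(x)-\langle\frac{1}{\beta}(x-p_x),h\rangle$ between $0$ and $\frac{1}{2\beta}\|h\|^2$, both bounds are already $o(\|h\|)$ and Fr\'echet differentiability follows immediately with no appeal to convexity; the convexity argument you describe is only needed in the weaker situation where one has just a one-sided affine minorant plus an $o(\|h\|)$ majorant, which is not the case here.
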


\begin{lemma}
\label{lem:moreau_error}
Let $g\in\Gamma_0(\calH)$ be $L_g$-Lipschitz continuous and $\beta>0$. Then
\begin{itemize}
\item $g_\beta \leq g \leq g_\beta + \frac{1}{2}\beta L_g^2$
\item if $\beta \geq \beta'>0$, then  $g_\beta\leq g_{\beta'} \leq g_\beta + \frac{1}{2}(\beta-\beta')L_g^2$.
\end{itemize}
\end{lemma}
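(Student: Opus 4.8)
The plan is to treat the four inequalities in turn: the first three drop straight out of the definition \eqref{eq:moreau}, and only the last one needs a short construction.

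For $g_\beta \le g$ I would just plug $u=x$ into the minimization in \eqref{eq:moreau}, which gives $g_\beta(x)\le \tfrac{1}{2\beta}\|x-x\|^2+g(x)=g(x)$. For $g \le g_\beta + \tfrac12\beta L_g^2$ the idea is to start from the Lipschitz inequality $g(x)\le g(u)+L_g\|x-u\|$, valid for every $u\in\calH$, and to absorb the linear term via the elementary identity
\[
L_g\,\|x-u\| \;=\; \frac{1}{2\beta}\|x-u\|^2 + \frac{\beta L_g^2}{2} - \frac{1}{2\beta}\bigl(\|x-u\|-\beta L_g\bigr)^2 \;\le\; \frac{1}{2\beta}\|x-u\|^2 + \frac{\beta L_g^2}{2}\,,
\]
so that $g(x)\le g(u)+\tfrac{1}{2\beta}\|x-u\|^2+\tfrac12\beta L_g^2$ for every $u$; taking the infimum over $u$ on the right-hand side yields precisely $g(x)\le g_\beta(x)+\tfrac12\beta L_g^2$.

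For the second bullet, the monotonicity $g_\beta\le g_{\beta'}$ (when $\beta\ge\beta'$) is immediate, since $\tfrac{1}{2\beta}\le\tfrac{1}{2\beta'}$ makes the minimand defining $g_\beta$ pointwise below the one defining $g_{\beta'}$. For the remaining bound $g_{\beta'}\le g_\beta+\tfrac12(\beta-\beta')L_g^2$, assume $\beta>\beta'$ (the case $\beta=\beta'$ being trivial), fix $x$, set $u^\star=\prox_{\beta g}(x)$ so that $g_\beta(x)=g(u^\star)+\tfrac{1}{2\beta}\|x-u^\star\|^2$, and test the minimand of $g_{\beta'}(x)$ at the point $v=(1-t)x+t\,u^\star$ on the segment from $x$ to $u^\star$, with $t=\beta'/\beta\in(0,1)$. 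Then $\|x-v\|=t\|x-u^\star\|$ and $\|v-u^\star\|=(1-t)\|x-u^\star\|$, so using the Lipschitz bound on $g(v)$,
\[
g_{\beta'}(x)\;\le\; g(v)+\frac{t^2}{2\beta'}\|x-u^\star\|^2 \;\le\; g(u^\star)+L_g(1-t)\|x-u^\star\| + \frac{t^2}{2\beta'}\|x-u^\star\|^2\,.
\]
Substituting $t=\beta'/\beta$ and subtracting $g_\beta(x)+\tfrac12(\beta-\beta')L_g^2$, the difference is bounded above by $-\tfrac{\beta-\beta'}{2}\bigl(\tfrac{1}{\beta}\|x-u^\star\|-L_g\bigr)^2\le 0$ — once more the same square completion — which gives the claim.

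An equivalent but more conceptual route to the last inequality is to note that $g_{\beta'}$ is itself a finite, $L_g$-Lipschitz member of $\Gamma_0(\calH)$ (shift the minimizer $\prox_{\beta' g}(y)$ by $x-y$ to obtain $g_{\beta'}(x)\le g_{\beta'}(y)+L_g\|x-y\|$), and that $\bigl(g_{\beta'}\bigr)_{\beta-\beta'}=g_\beta$ by associativity of infimal convolution together with the fact that the infimal convolution of the quadratics $q_{\beta'}$ and $q_{\beta-\beta'}$ (where $q_\gamma=\tfrac{1}{2\gamma}\|\cdot\|^2$) equals $q_\beta$; applying the first bullet to $g_{\beta'}$ with smoothing parameter $\beta-\beta'$ then closes the argument. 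The only step that is not purely mechanical, in either version, is manufacturing this relation between the envelope at scale $\beta'$ and the envelope at scale $\beta$ — the segment test point above, or the infimal-convolution semigroup property; once that is in hand, everything else reduces to the definition \eqref{eq:moreau} and the scalar inequality $ab\le\tfrac12 a^2+\tfrac12 b^2$, so I expect the write-up to be very short.
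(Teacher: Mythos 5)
Your proof is correct. For the first bullet it coincides with the paper's argument: $u=x$ gives $g_\beta\le g$, and the completed square $L_g\|x-u\|\le\tfrac{1}{2\beta}\|x-u\|^2+\tfrac12\beta L_g^2$ combined with the Lipschitz bound gives the other side, exactly as in the paper's manipulation of $\Psi_x(u)$. For the second bullet the paper simply invokes the first bullet together with the semigroup identity $\moreau{(\moreau{g}{\beta'})}{\beta-\beta'}=\moreau{g}{\beta}$, citing Proposition 12.22 of Bauschke--Combettes; your second, ``conceptual'' route is precisely this argument, and you correctly make explicit the one hypothesis the paper leaves tacit, namely that $g_{\beta'}$ is itself $L_g$-Lipschitz so that the first bullet applies to it. Your first route --- testing the minimand of $g_{\beta'}(x)$ at the point $v=(1-t)x+t\prox_{\beta g}(x)$ with $t=\beta'/\beta$ --- is a genuinely self-contained alternative: it trades the external infimal-convolution identity for a short computation ending in the same square completion, at the mild cost of invoking attainment of the minimum defining $\prox_{\beta g}(x)$ (which holds for any $g\in\Gamma_0(\calH)$). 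Either version is complete; the segment argument has the advantage of keeping the lemma free of references beyond the definition \eqref{eq:moreau}.
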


\begin{proof}
Define $\Psi_x(u)= \frac{1}{2\beta}\|x-u\|^2 + g(u)$.
We have $g_\beta(x) = \min_u \Psi_x(u) \leq \Psi_x(x) = g(x)$,
and this proves the left hand side of the first property.  For the
other side of the inequality, we have
\begin{align*}
\Psi_x(u) = \frac{1}{2\beta}\|x-u\|^2 + g(u) - g(x) + g(x)
\geq \frac{1}{2\beta}\|x-u\|^2 + g(x) - L_g \|x-u\|
\end{align*}
where we have used the Lipschitz property of $g$. This implies that
\begin{align*}
g_\beta(x) &= \min_u \Psi_x(u) \geq g(x) + \inf_u \left(\frac{1}{2\beta}\|x-u\|^2 - L_g \|x-u\| \right)
= g(x) - \frac{1}{2}\beta L_g^2~.
\end{align*}
The second property follows from the first one and
$\moreau{(\moreau{g}{\beta'})}{\beta-\beta'}=
\moreau{g}{\beta}$
(Proposition 12.22 in \cite{combettes_book}).
\end{proof}

\noindent Thus, the smoothing parameter $\beta$ controls the tradeoff
between the smoothness and the quality of approximation.  

At each iteration, the hybrid Algorithm \ref{alg:hybrid} computes the gradient 
of the {\em smoothed} part $f + g_{\beta_k}\circ A$, where $\beta_k$ is
the adaptive smoothing parameter. 
By Lemma \ref{lem:moreau_grad} and the chain rule, its gradient equals 
$$
\nabla (f + g_{\beta_k}\circ A)(x) = \nabla f(x) + \frac{1}{\beta_k} A^*\left(Ax - \prox_{\beta_k g}(Ax)\right) \;.
$$
The function $f$ is $(p,L_f)$-smooth and the function $g_{\beta_k}\circ A$ is $(1,\frac{1}{\beta_k}\|A\|^2)$-smooth.
By selecting $\beta_k$ that approaches $0$ as $k$ increases, we ensure that $g_{\beta_k}$ approaches $g$.

Algorithm \ref{alg:hybrid} can be viewed as an extension of conditional gradient algorithms 
and of Algorithm \ref{alg:fw}. But besides conditional gradient methods, the algorithm also exploits ideas
from proximal algorithms obtained by smoothing Lipschitz terms in the objective.
These methods are primarily due to Nesterov and have been successfully applied to many problems
\cite{nesterov_smooth,excessive,prisma,multicomposite,beck_smooth}. 
The smoothing we apply here is a type of Moreau envelope 
as in the variational problem \eqref{eq:moreau}, which is connected to Nesterov smoothing
-- see, for example, \cite{prisma,beck_smooth}.

However, unlike Nesterov's smoothing and other proximal methods, in our method we choose not to smooth
function $\omega$, or apply any other proximity-like operation to it. 
We do this because computation of the proximity operator of $\omega$ is not available in 
the settings which we consider here.\footnote{Note that Nesterov's smoothing would require $\omega$ 
to be Lipschitz continuous, and hence it does not apply directly to the case of bounded $\dom \omega$
but to a similar regularization problem with $\omega$ as a penalty in the objective.} 
For example, if $\omega$ expresses a trace norm constraint, the proximity
computation requires a {\em full} singular value decomposition, which does not scale well
with the size of the matrix. 
In contrast, the dual subgradient requires only computation
of a single pair of dominant singular vectors and this is feasible 
even for very large matrices using the power method or Lanczos algorithms.


\subsection{Convergence Rate}
\label{sec:hybrid_rate}

A bound on the convergence rate of the objective function can be obtained 
by first bounding the convergence of
the smoothed objective in a recursive way. The required number of iterations
is a function of $p$ and $\varepsilon$, where $p$ is the smoothing exponent of $f$ and
$\varepsilon$ is the accuracy in terms of the objective function. 
Regarding the proof technique, we should note that the HCGS Algorithm \ref{alg:hybrid} and the
proof of its convergence properties are mostly related to 
conditional gradient methods. On the other side, the proof technique does not 
share similarities with proximal methods such as ISTA or FISTA 
\cite{fista,nesterov83,nesterov_book}.

\begin{theorem}
Suppose that, for every $k\in\N$, $\alpha_k \in [0,1]$ and 
$\beta_k \geq \beta_{k+1} > 0$.
Let $F = f + g\circ A$, $F_k = f + g_{\beta_k}\circ A$. Then
$x_k \in \dom \omega$,
for every $k\in\N$, and 
\begin{multline}
F_{k+1}(x_{k+1}) + \omega(x_{k+1}) - F(x) -\omega(x)  \leq 
(1-\alpha_k) \bigl( F_k(x_k) + \omega(x_k) -  F(x) -\omega(x) \bigr) \\
+ \frac{(2\rho)^{p+1}L_f}{p+1}\alpha_k^{p+1} 
+ 2\|A\|^2 \rho^2 \frac{\alpha_k^2}{\beta_k}
    + \frac{1}{2}(\beta_k - \beta_{k+1})L_g^2 
\label{eq:hybrid_rate}
\end{multline}
for every $x\in\dom \omega$, $k\in\N$.
\label{thm:hybrid}
\end{theorem}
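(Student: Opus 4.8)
The plan is to imitate the classical conditional gradient argument (as behind Theorem \ref{thm:rate}), but applied to the \emph{smoothed} objective $F_k+\omega$, while carefully bookkeeping the change of smoothing parameter from $\beta_k$ to $\beta_{k+1}$. The first observation is that, by Lemma \ref{lem:moreau_grad} and the chain rule, $\nabla(g_{\beta_k}\circ A)(x)=\tfrac1{\beta_k}A^*\bigl(Ax-\prox_{\beta_k g}(Ax)\bigr)$, so the vector $z_k$ from step (I) is exactly $z_k=-\nabla F_k(x_k)$. The inclusion $x_k\in\dom\omega$ then follows by induction: $x_1\in\dom\omega$ by hypothesis, and if $x_k\in\dom\omega$ then $y_k\in\partial\omega^*(z_k)$ gives $z_k\in\partial\omega(y_k)$ by Theorem \ref{thm:duality}, hence $y_k\in\dom\omega$ by Proposition \ref{prop:dom_grad}, and $x_{k+1}=(1-\alpha_k)x_k+\alpha_k y_k\in\dom\omega$ by convexity of $\dom\omega$ and $\alpha_k\in[0,1]$. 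In particular $\|y_k-x_k\|\le 2\rho$.

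Next I would derive the core one-step estimate. Since $F_k=f+g_{\beta_k}\circ A$ is convex and differentiable with $f$ being $(p,L_f)$-smooth and $g_{\beta_k}\circ A$ being $(1,\tfrac1{\beta_k}\|A\|^2)$-smooth (Lemma \ref{lem:moreau_grad} plus the chain rule), applying the descent lemma (Lemma \ref{lem:descent}) to each summand, using $x_{k+1}-x_k=\alpha_k(y_k-x_k)$ and $\|y_k-x_k\|\le 2\rho$, gives
\[
F_k(x_{k+1})\le F_k(x_k)+\alpha_k\langle y_k-x_k,\nabla F_k(x_k)\rangle+\frac{(2\rho)^{p+1}L_f}{p+1}\alpha_k^{p+1}+2\|A\|^2\rho^2\frac{\alpha_k^2}{\beta_k}.
\]
Convexity of $\omega$ applied to step (III) gives $\omega(x_{k+1})\le(1-\alpha_k)\omega(x_k)+\alpha_k\omega(y_k)$, while the subgradient inequality for $\omega$ at $y_k$ with subgradient $z_k=-\nabla F_k(x_k)$ (valid since $z_k\in\partial\omega(y_k)$) gives $\omega(y_k)\le\omega(x)+\langle\nabla F_k(x_k),x-y_k\rangle$ for every $x\in\dom\omega$. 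Adding these to the displayed inequality, the two inner-product terms combine into $\alpha_k\langle\nabla F_k(x_k),x-x_k\rangle$, which by convexity of $F_k$ is at most $\alpha_k\bigl(F_k(x)-F_k(x_k)\bigr)$, yielding
\[
F_k(x_{k+1})+\omega(x_{k+1})\le(1-\alpha_k)\bigl(F_k(x_k)+\omega(x_k)\bigr)+\alpha_k\bigl(F_k(x)+\omega(x)\bigr)+\frac{(2\rho)^{p+1}L_f}{p+1}\alpha_k^{p+1}+2\|A\|^2\rho^2\frac{\alpha_k^2}{\beta_k}.
\]

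Finally I would invoke Lemma \ref{lem:moreau_error} twice, in opposite directions. At the comparison point $x$: $g_{\beta_k}\le g$ gives $F_k(x)\le F(x)$, so the $\alpha_k F_k(x)$ term is bounded by $\alpha_k F(x)$. At the new iterate: since $\beta_k\ge\beta_{k+1}$, the second property gives $g_{\beta_{k+1}}(Ax_{k+1})\le g_{\beta_k}(Ax_{k+1})+\tfrac12(\beta_k-\beta_{k+1})L_g^2$, i.e.\ $F_{k+1}(x_{k+1})\le F_k(x_{k+1})+\tfrac12(\beta_k-\beta_{k+1})L_g^2$. Substituting both and subtracting $F(x)+\omega(x)$ from both sides, then regrouping the $(1-\alpha_k)$ coefficients, produces exactly \eqref{eq:hybrid_rate}. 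The argument is mostly routine bookkeeping once $z_k=-\nabla F_k(x_k)$ is recognized; the only step needing genuine care is this dual use of Lemma \ref{lem:moreau_error} — bounding $F_k$ above by $F$ at the test point but bounding $F_{k+1}$ above in terms of $F_k$ at the iterate — so that the recursion closes on the correct reference function $F$, together with the observation that, exactly as in the standard conditional gradient proof, subgradient optimality of $y_k$ plus convexity of $F_k$ is what converts the inner-product terms into the progress quantity $\alpha_k\bigl(F_k(x)-F_k(x_k)\bigr)$.
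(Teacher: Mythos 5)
Your proposal is correct and follows essentially the same route as the paper's proof: the descent lemma applied separately to $f$ and to $g_{\beta_k}\circ A$, the identification $z_k=-\nabla F_k(x_k)$, the subgradient/conjugate characterization of $y_k$ combined with convexity of $F_k$ and $\omega$, and the two applications of Lemma \ref{lem:moreau_error} (once at the test point $x$, once to pass from $g_{\beta_k}$ to $g_{\beta_{k+1}}$ at the iterate). The only differences are cosmetic bookkeeping, such as substituting $\|y_k-x_k\|\le 2\rho$ earlier rather than at the end.
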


\begin{proof}
  For every $k\in \N$, we apply the descent lemma \ref{lem:descent}
  twice to obtain that
  \begin{align}\nonumber
    f(x_{k+1})  & \leq 
    f(x_k) + \lb \nabla f(x_k), x_{k+1}-x_k\rb + \frac{L_f}{p+1} \|x_{k+1}-x_k\|^{p+1}  \\
    & = f(x_k) + \alpha_k \lb \nabla f(x_k), y_k-x_k\rb + \frac{\alpha_k^{p+1}L_f}{p+1} \|y_k-x_k\|^{p+1} \;.
\label{eq:dl1}
  \end{align}
and
  \begin{align}\nonumber
    g_{\beta_k}(Ax_{k+1})  & \leq 
    g_{\beta_k}(Ax_k) + \lb \nabla (g_{\beta_k}\circ A)(x_k), x_{k+1}-x_k\rb + \frac{1}{2\beta_k}\|A\|^2 \|x_{k+1}-x_k\|^2  \\
    & = g_{\beta_k}(Ax_k) + \alpha_k \lb \nabla (g_{\beta_k}\circ A)(x_k), y_k-x_k\rb + \frac{\alpha_k^2}{2\beta_k}\|A\|^2 \|y_k-x_k\|^2 \;.
\label{eq:dl2}
  \end{align}
  Applying Lemma \ref{lem:moreau_error} to $g_{\beta_k}(Ax_{k+1})$ yields
  \begin{align*}
    g_{\beta_{k+1}}(Ax_{k+1})  \leq 
    g_{\beta_k}(Ax_k) + \alpha_k \lb \nabla (g_{\beta_k}\circ A)(x_k), y_k-x_k\rb & + \frac{\alpha_k^2}{2\beta_k}\|A\|^2 \|y_k-x_k\|^2 \\
    & + \frac{1}{2}(\beta_k - \beta_{k+1})L_g^2 \;.
  \end{align*}
Adding \eqref{eq:dl1} and \eqref{eq:dl2} we obtain that 
  \begin{align}\nonumber
    F_{k+1}(x_{k+1})  \leq 
    F_k(x_k) & + \alpha_k \lb \nabla F_k(x_k), y_k-x_k\rb + \frac{\alpha_k^{p+1}L_f}{p+1} \|y_k-x_k\|^{p+1} 
\\ & + \frac{\alpha_k^2}{2\beta_k}\|A\|^2 \|y_k-x_k\|^2 
    + \frac{1}{2}(\beta_k - \beta_{k+1})L_g^2 \;.
    \label{eq:proof_d}
  \end{align}

  By Theorem \ref{thm:duality} and Proposition \ref{prop:dom_grad},
  $y_k \in \dom \omega$ for every $k\in \N$.  Since $\alpha_k \in
  [0,1]$, applying an induction argument yields that $x_k\in \dom
  \omega$, for every $k\in \N$.  Thus, the values of the objective
  generated by the algorithm are finite.  From the construction of
  $y_k$ in steps (I), (II) and Theorem \ref{thm:duality}, we obtain that, for
  every $x \in \dom \omega$,
  \begin{align*}
    \lb y_k, -\nabla F_k(x_k) \rb - \omega(y_k) \geq 
    \lb x, -\nabla F_k(x_k) \rb - \omega(x) 
  \end{align*}
  and hence that 
  \begin{align*}
    \lb y_k-x_k, -\nabla F_k(x_k) \rb - \omega(y_k) & \geq 
    \lb x-x_k, -\nabla F_k(x_k) \rb - \omega(x) \\
    & \geq F_k(x_k) - F_k(x) -\omega(x) \;.
  \end{align*}
  Applying Lemma \ref{lem:moreau_error} to $g_{\beta_k}(Ax)$ yields
  \begin{align*}
    \lb y_k-x_k, -\nabla F_k(x_k) \rb - \omega(y_k) \geq 
    F_k(x_k) - F(x) - \omega(x) 
  \end{align*}
  and, therefore,
  \begin{align}
    \alpha_k \lb y_k-x_k, -\nabla F_k(x_k) \rb - \alpha_k \omega(y_k) \geq 
    \alpha_k F_k(x_k) - \alpha_k (F(x) +\omega(x) ) \;.
    \label{eq:proof_sub}
  \end{align}

  Adding \eqref{eq:proof_d} and \eqref{eq:proof_sub}, we obtain that
  \begin{multline*}
    F_{k+1}(x_{k+1}) + \alpha_k F_k(x_k) - \alpha_k (F(x) +\omega(x) ) \leq 
    F_k(x_k) - \alpha_k \omega(y_k) \\
+ \frac{\alpha_k^{p+1}L_f}{p+1} \|y_k-x_k\|^{p+1} 
+ \frac{\alpha_k^2}{2\beta_k}\|A\|^2 \|y_k-x_k\|^2 
    + \frac{1}{2}(\beta_k - \beta_{k+1})L_g^2 
  \end{multline*}
  or that
  \begin{multline*}
    F_{k+1}(x_{k+1}) + \omega(x_{k+1}) - F(x) -\omega(x) \leq
    (1-\alpha_k) \bigl( F_k(x_k) -  F(x) - \omega(x) \bigr) 
     + \omega(x_{k+1}) \\ - \alpha_k \omega(y_k) 
+ \frac{\alpha_k^{p+1}L_f}{p+1} \|y_k-x_k\|^{p+1} 
+ \frac{\alpha_k^2}{2\beta_k}\|A\|^2 \|y_k-x_k\|^2 
    + \frac{1}{2}(\beta_k - \beta_{k+1})L_g^2 \\
    \leq (1-\alpha_k) \bigl( F_k(x_k) + \omega(x_k) -  F(x) -\omega(x) \bigr) 
+ \frac{\alpha_k^{p+1}L_f}{p+1} \|y_k-x_k\|^{p+1} \\ 
+ \frac{\alpha_k^2}{2\beta_k}\|A\|^2 \|y_k-x_k\|^2 
    + \frac{1}{2}(\beta_k - \beta_{k+1})L_g^2 \;,
  \end{multline*}
  where the last step uses the convexity of $\omega$ and (III).
  Since $x_k,y_k \in \dom \omega$, for every $k\in\N$, it follows that 
  $\|y_k\|,\|x_k\|\leq \rho$ and hence that \eqref{eq:hybrid_rate} holds.
\end{proof}

\begin{corollary}
Suppose that, $\alpha_1=1$,  $\alpha_k \in [0,1]$ and 
$\beta_k \geq \beta_{k+1} > 0$, for every $k\in\N$.
Let $P_j = \prod_{i=j+1}^k (1-\alpha_i)$,  
for every $j\in\{1,\dots,k\}$.
Then
\begin{multline*}
f(x_{k+1}) + g(Ax_{k+1}) + \omega(x_{k+1}) - f(x) - g(Ax) - \omega(x) 
\leq 
 \frac{(2\rho)^{p+1}L_f}{p+1} \sum_{j=1}^k P_j\alpha_j^{p+1} \\
+ 2\|A\|^2 \rho^2 \sum_{j=1}^k P_j \frac{\alpha_j^2}{\beta_j} 
+ \frac{1}{2}L_g^2 \sum_{j=1}^k P_j (\beta_j - \beta_{j+1})
+ \frac{1}{2}\beta_{k+1} L_g^2 
\end{multline*}
for every $x\in\dom \omega$, $k\in\N$.
\label{cor:hybrid}
\end{corollary}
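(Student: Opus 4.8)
The plan is to unroll the one-step recursion of Theorem~\ref{thm:hybrid} into a closed-form bound. First I would set $e_k := F_k(x_k) + \omega(x_k) - F(x) - \omega(x)$ and rewrite \eqref{eq:hybrid_rate} as $e_{k+1} \le (1-\alpha_k) e_k + r_k$, where $r_k := \frac{(2\rho)^{p+1}L_f}{p+1}\alpha_k^{p+1} + 2\|A\|^2\rho^2 \frac{\alpha_k^2}{\beta_k} + \frac12(\beta_k-\beta_{k+1})L_g^2$ collects the three error terms. A standard induction on this linear recursion gives $e_{k+1} \le P_0\, e_1 + \sum_{j=1}^{k} P_j\, r_j$, where $P_j = \prod_{i=j+1}^{k}(1-\alpha_i)$ as in the statement (with the empty product $P_k=1$). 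The hypothesis $\alpha_1 = 1$ makes $P_0 = \prod_{i=1}^k(1-\alpha_i) = 0$, which kills the dependence on the initial gap $e_1$; this is exactly why that assumption is imposed, and I would point this out.

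Next I would distribute $\sum_{j=1}^k P_j r_j$ over the three pieces of $r_j$, producing precisely the three sums $\frac{(2\rho)^{p+1}L_f}{p+1}\sum_{j=1}^k P_j\alpha_j^{p+1}$, $2\|A\|^2\rho^2\sum_{j=1}^k P_j\frac{\alpha_j^2}{\beta_j}$, and $\frac12 L_g^2 \sum_{j=1}^k P_j(\beta_j-\beta_{j+1})$ that appear in the corollary. At this stage the left-hand side is $F_{k+1}(x_{k+1}) + \omega(x_{k+1}) - F(x) - \omega(x)$, i.e.\ involves the \emph{smoothed} objective $F_{k+1} = f + g_{\beta_{k+1}}\circ A$ rather than the true objective $F = f + g\circ A$. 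To convert, I would invoke Lemma~\ref{lem:moreau_error} applied to $g_{\beta_{k+1}}(Ax_{k+1})$: since $g$ is $L_g$-Lipschitz, $g(Ax_{k+1}) \le g_{\beta_{k+1}}(Ax_{k+1}) + \frac12\beta_{k+1}L_g^2$, so replacing $F_{k+1}(x_{k+1})$ by $F(x_{k+1}) = f(x_{k+1}) + g(Ax_{k+1})$ costs exactly the extra additive term $\frac12\beta_{k+1}L_g^2$, which is the last term in the corollary. Writing $F(x_{k+1})$ out as $f(x_{k+1}) + g(Ax_{k+1})$ and $F(x)$ as $f(x)+g(Ax)$ gives the displayed form verbatim.

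There is no real obstacle here — the argument is a routine telescoping of the recursion plus one application of the smoothing-error bound. The only points requiring a little care are bookkeeping: getting the index range and the empty-product convention for $P_j$ right so that the $j=k$ term carries coefficient $1$, checking that $P_0 = 0$ genuinely follows from $\alpha_1 = 1$ (one factor in the product is $1-\alpha_1 = 0$), and making sure the monotonicity hypothesis $\beta_k \ge \beta_{k+1} > 0$ is what legitimizes the telescoping-style terms $\beta_j - \beta_{j+1} \ge 0$ and the appeal to Lemma~\ref{lem:moreau_error} at every step. I would also note that all $x_k \in \dom\omega$ (hence the objective values are finite and $\|x_k\|\le\rho$), which is already guaranteed by Theorem~\ref{thm:hybrid} under $\alpha_k \in [0,1]$, so the recursion is valid for all $k$.
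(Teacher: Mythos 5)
Your proposal is correct and follows essentially the same route as the paper: the paper likewise sets $D_k = f(x_k) + g_{\beta_k}(Ax_k) + \omega(x_k) - f(x) - g(Ax) - \omega(x)$, multiplies the recursion from Theorem \ref{thm:hybrid} by $P_j$ and sums (with the initial term $(1-\alpha_1)P_1 D_1$ vanishing because $\alpha_1=1$, which is your $P_0 e_1 = 0$), and finishes by applying Lemma \ref{lem:moreau_error} to $g_{\beta_{k+1}}(Ax_{k+1})$ to pick up the final $\tfrac{1}{2}\beta_{k+1}L_g^2$ term. Your additional remarks on the empty-product convention and the role of the monotonicity of $\beta_k$ are accurate bookkeeping points consistent with the paper's argument.
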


\begin{proof}
Let $D_k = f(x_k) + g_{\beta_k}(Ax_k) + \omega(x_k) - f(x) - g(Ax) - \omega(x)$.
Applying Theorem \ref{thm:hybrid}, we obtain 
\beq
D_{j+1} \leq (1-\alpha_j) D_j
+ \frac{(2\rho)^{p+1}L_f}{p+1}\alpha_j^{p+1} 
+ 2\|A\|^2 \rho^2 \frac{\alpha_j^2}{\beta_j}
    + \frac{1}{2}(\beta_j - \beta_{j+1})L_g^2 
\eeq
for every $j\in\{1,\dots,k\}$. Multiplying by $P_j$ and adding up, we obtain
\begin{multline*}
D_{k+1} \leq (1-\alpha_1) P_1 D_1
+ \frac{(2\rho)^{p+1}L_f}{p+1} \sum_{j=1}^k P_j\alpha_j^{p+1} 
+ 2\|A\|^2 \rho^2 \sum_{j=1}^k P_j \frac{\alpha_j^2}{\beta_j} \\
+ \frac{1}{2}L_g^2 \sum_{j=1}^k P_j (\beta_j - \beta_{j+1}) \\
= \frac{(2\rho)^{p+1}L_f}{p+1} \sum_{j=1}^k P_j\alpha_j^{p+1} 
+ 2\|A\|^2 \rho^2 \sum_{j=1}^k P_j \frac{\alpha_j^2}{\beta_j} 
+ \frac{1}{2}L_g^2 \sum_{j=1}^k P_j (\beta_j - \beta_{j+1}) 
\end{multline*}
Applying Lemma \ref{lem:moreau_error} to $g_{\beta_{k+1}}(Ax_{k+1})$ the assertion follows. 
\end{proof}

\begin{corollary}
If $\alpha_k = \frac{2}{k+1}$, $\beta>0$ and 
$\beta_k = \frac{\beta}{\sqrt{k}}$, for every $k\in\N$, then
\begin{multline*}
f(x_{k+1}) + g(Ax_{k+1}) + \omega(x_{k+1}) - f(x) - g(Ax) - \omega(x) 
\leq \\
\frac{(4\rho)^{p+1} L_f}{(p+1)(k+1)^p} 
+ \frac{8\rho^2\|A\|^2}{\beta\sqrt{k+1}}
+ \frac{1}{2}L_g^2 \beta\frac{\sqrt{k+2}}{k}
+ \frac{L_g^2\beta}{2\sqrt{k+1}}
\end{multline*}
for every $x\in\dom \omega$, $k\in\N$.
\label{cor:hybrid3}
\end{corollary}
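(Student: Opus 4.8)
The plan is to specialize Corollary \ref{cor:hybrid} to the particular choices $\alpha_k = \tfrac{2}{k+1}$ and $\beta_k = \tfrac{\beta}{\sqrt k}$ and then estimate each of the four terms in the resulting bound. First I would check the hypotheses of Corollary \ref{cor:hybrid}: one has $\alpha_1 = \tfrac{2}{2} = 1$, $\alpha_k \in (0,1]$ for every $k\in\N$, and $\beta_k = \tfrac{\beta}{\sqrt k}$ is positive and nonincreasing, so $\beta_k \geq \beta_{k+1} > 0$. Thus the corollary applies and $f(x_{k+1}) + g(Ax_{k+1}) + \omega(x_{k+1}) - f(x) - g(Ax) - \omega(x)$ is bounded by the four-term sum there.

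The one computation that really matters is the closed form of the weights $P_j = \prod_{i=j+1}^k (1-\alpha_i)$. Since $1 - \alpha_i = \tfrac{i-1}{i+1}$, the product telescopes and yields $P_j = \tfrac{j(j+1)}{k(k+1)}$ for every $j\in\{1,\dots,k\}$ (with $P_k = 1$). Substituting this, each of the four sums in Corollary \ref{cor:hybrid} turns into an explicit finite sum that I would bound by an elementary inequality.

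For the first sum, $P_j\alpha_j^{p+1} = \tfrac{2^{p+1}}{k(k+1)}\cdot\tfrac{j}{(j+1)^p}$, and since $p\le 1$ we have $\tfrac{j}{(j+1)^p} \le (j+1)^{1-p} \le (k+1)^{1-p}$; summing over $j\le k$ gives $\sum_{j=1}^k P_j\alpha_j^{p+1} \le \tfrac{2^{p+1}}{(k+1)^p}$, and multiplying by $\tfrac{(2\rho)^{p+1}L_f}{p+1}$ and using $4^{p+1} = 2^{p+1}\cdot 2^{p+1}$ produces the term $\tfrac{(4\rho)^{p+1}L_f}{(p+1)(k+1)^p}$. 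For the second sum, $P_j\tfrac{\alpha_j^2}{\beta_j} = \tfrac{4}{\beta k(k+1)}\cdot\tfrac{j^{3/2}}{j+1}$, and $\tfrac{j^{3/2}}{j+1}\le\sqrt j\le\sqrt{k+1}$, so $\sum_{j=1}^k P_j\tfrac{\alpha_j^2}{\beta_j}\le\tfrac{4}{\beta\sqrt{k+1}}$, giving the term $\tfrac{8\rho^2\|A\|^2}{\beta\sqrt{k+1}}$. For the third sum I would write $\beta_j-\beta_{j+1} = \beta\tfrac{\sqrt{j+1}-\sqrt j}{\sqrt{j(j+1)}}$, so that $P_j(\beta_j-\beta_{j+1}) = \tfrac{\beta}{k(k+1)}\cdot\tfrac{\sqrt{j(j+1)}}{\sqrt j+\sqrt{j+1}} \le \tfrac{\beta}{2k(k+1)}\sqrt{j+1}$ using $\sqrt j+\sqrt{j+1}\ge 2\sqrt j$, then bound $\sum_{j=1}^k\sqrt{j+1}$ by an integral $\le \tfrac23(k+2)^{3/2}$ and simplify (using $\tfrac{k+2}{3(k+1)}\le 1$) to obtain $\sum_{j=1}^k P_j(\beta_j-\beta_{j+1}) \le \beta\tfrac{\sqrt{k+2}}{k}$, which accounts for the $\tfrac12 L_g^2\beta\tfrac{\sqrt{k+2}}{k}$ term. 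Finally $\tfrac12\beta_{k+1}L_g^2 = \tfrac{L_g^2\beta}{2\sqrt{k+1}}$ is the fourth term verbatim. Adding the four bounds gives exactly the claimed inequality.

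The main obstacle — in fact the only step requiring any care — is the third term. The crude estimate $P_j\le 1$ gives only $\sum_{j=1}^k P_j(\beta_j-\beta_{j+1}) \le \beta_1 - \beta_{k+1} \le \beta$, which does not vanish as $k\to\infty$ and is useless; one must keep the full factor $P_j = \tfrac{j(j+1)}{k(k+1)}$ and combine it with the exact form of the increments $\beta_j-\beta_{j+1}$ (equivalently, perform a summation by parts against $\beta_j$) to recover the $O(k^{-1/2})$ decay. Everything else is a routine substitution into Corollary \ref{cor:hybrid}.
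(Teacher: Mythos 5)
Your proposal is correct and follows exactly the route of the paper, which simply invokes Corollary \ref{cor:hybrid} together with the telescoping computation $P_j=\frac{j(j+1)}{k(k+1)}$; your term-by-term estimates (including the careful treatment of $\sum_j P_j(\beta_j-\beta_{j+1})$) supply precisely the details the paper leaves as ``it follows easily.''
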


\begin{proof}
It follows easily from Corollary \ref{cor:hybrid} and the computation $P_j = \frac{j(j+1)}{k(k+1)}$.
\end{proof}

We notice that when $p\geq \frac{1}{2}$ the asymptotic rate does not depend on $p$ and translates to
$\calO\left(\frac{1}{\varepsilon^2}\right)$ iterations, if $\varepsilon$ is the precision in terms of
the objective function. This rate of convergence is an 
order of magnitude slower than the rate for the standard conditional gradient algorithm
(Corollary \ref{cor:rate1}). Thus, the extended flexibility of handling multiple 
additional penalties (function $g$) and the Moreau smoothing incur a cost in terms of iterations.
In other words, the class of optimization problems to which the hybrid algorithm applies
is significantly larger than that of the standard algorithm \ref{alg:fw} and 
a deterioration in the rate of convergence is inevitable.
When $0<p<\frac{1}{2}$, the bound is dominated by the term involving $p$ and the number of
iterations required grows as $\calO\left(\varepsilon^{-\frac{1}{p}}\right)$. 

If there is no $g\circ A$ term ($A=0, g=0$) then the algorithm becomes 
the standard conditional gradient and the corollary reduces to known bounds for
standard conditional gradient methods. The number of iterations grows as
$\calO\left(\varepsilon^{-\frac{1}{p}}\right)$, which ranges from 
$\calO\left(\frac{1}{\varepsilon}\right)$ (for $p=1$) to 
impractical when $f$ is too ``close'' to a Lipschitz continuous function ($p\simeq 0$).

The rate in Corollary \ref{cor:hybrid3} is also slower 
than the $\calO\left(\frac{1}{\varepsilon}\right)$ rates obtained with smoothing methods, such as
\cite{prisma,nesterov_smooth,multicomposite,beck_smooth}. However, smoothing methods require 
a more powerful computational oracle (the proximity operator of $\omega$ instead of the dual subgradient)
and hence may be inapplicable in problems like those involving very large matrices, 
because computation of $\prox_\omega$ may not scale well. Another $\calO\left(\frac{1}{\varepsilon^2}\right)$ 
alternative is subgradient methods, but these may be inapplicable too for similar reasons. 
For example, the subgradient of the trace norm as either a penalty 
term or a constraint requires a full singular value decomposition.

In addition, like other conditional gradient methods or greedy methods and matching pursuits,
the HCGS algorithm \ref{alg:hybrid} builds
a parsimonious solution in additive fashion rather than starting from a complex solution and 
then simplifying it.
This feature may be desirable in itself whenever a parsimonious solution is sought. 
For example, in many cases it is more important to obtain a
sparse or low rank estimate of the solution rather than a more accurate one with many small 
nonzero components or singular values. In machine learning problems,
especially, this is frequently the case since the optimization objective is just an approximation
of the ideal measure of expected risk \cite{greedy}.  
Another advantage of such algorithmic schemes is computational. In sparse estimation problems regularized with
an $\ell_1$ constraint, the data matrix or the dictionary may be huge and hence computation of 
$\nabla f(x)$ may be feasible only for sparse vectors $x$ (when $f$ is a quadratic function). 
Moreover, such a computation can be done efficiently since the gradient from the previous
iteration can be reused, due to update (III).

\subsection{Minimization of Lipschitz Continuous Functions}
\label{sec:lipschitz}

A special case of particular interest occurs when $f=0$, that is,
when there is no smooth part. Then HCGS solves the optimization problem
\begin{equation}
\min\left\{
g(Ax) + \omega(x) : x\in \calH
\right\}
\label{eq:lip}
\end{equation}
under Assumption \ref{ass:hybrid} as before. Namely, the objective
function consists of a Lipschitz term $g$ and a generic term $\omega$
defined on a bounded domain. 
For example, such a problem is the minimization of a Lipschitz continuous
function over a bounded domain. 
More generally, $g\circ A$ may incorporate a sum of multiple Lipshitz continuous 
penalties.

The HCGS algorithm specified to problem \eqref{eq:lip} is the same as
Algorithm \ref{alg:hybrid} with $\nabla f(x_k)$ removed. In this way, the
computational model of conditional gradient methods extends from
minimization of smooth functions to minimization of Lipschitz
continuous functions. Moreover, the convergence rate deteriorates from
$\calO\left(\frac{1}{\varepsilon}\right)$ for smooth functions to
$\calO\left(\frac{1}{\varepsilon^2}\right)$ for Lipschitz functions --
which is not surprising, since the latter are in general more
difficult to optimize than the former. This fact has been shown recently by Lan for 
several conditional gradient algorithms \cite{lan}. Lan has also
shown that these rates coincide with the lower complexity bounds for 
a family of algorithms involving $\partial \omega^*$ oracles. 
The above fact is also intriguing in view of the analogy to the results 
known about Nesterov's proximal methods \cite{nesterov_book}. 
Those methods, under a more powerful computational oracle for $\omega$, 
exhibit an $\calO\left(\frac{1}{\sqrt{\varepsilon}}\right)$ 
rate when $g$ is smooth versus an $\calO\left(\frac{1}{\varepsilon}\right)$
rate when $g$ is Lipschitz continuous.

\subsection{Implementation Details}
\label{sec:implementation}

It is worth noting that the HCGS algorithm does not require knowledge of the 
Lipschitz constants $L_f, L_g$ and can be implemented with an arbitrary choice of $\beta$.
An alternative is to optimize the bound in Corollary \ref{cor:hybrid3} with respect
to $\beta$, which gives an optimal choice of
$\frac{2\sqrt{2}\rho\|A\|}{L_g}$, asymptotically. If the desired accuracy
$\varepsilon$ can be specified in advance, then the optimal $\beta$
will also depend on $\varepsilon$. Computing such a $\beta$ value
is possible only if the Lipschitz constant and bound of the
optimization problem are available, but for regularization problems
these constants can be computed from the regularization parameters. 

For $p\geq\frac{1}{2}$, these two constants, $\rho$ and $L_g$, have the largest influence in
the convergence rate, since they appear in the $\calO\left(\frac{1}{\sqrt{k}}\right)$ 
terms that dominate the bound. The constant $\rho$ cannot be changed, since 
it is a property of the feasibility domain. However $L_g$ can be reduced
by rescaling the objective function and hence it can become independent of the dimensionality
of the problem.

Some care may be needed to tackle numerical issues arising from
very small values of $\beta_k$ as $k$ becomes large. These issues affect only
step (I), whereas the computation of $y_k$ in step (II) remains always inside the $\rho$-ball,
since $\omega^*$ is $\rho$-Lipschitz continuous. Moreover, for large $k$, the past estimates
dominate the update (III) and hence the effect of any numerical issues diminishes as $k$ grows.

\section{Applications}
\label{sec:special}

We now instantiate the HCGS algorithm \ref{alg:hybrid} to some special cases
which appear in applications and we present the corresponding algorithms.
These examples are only a sample and do not cover the whole range of possible applications.
First, consider the problem of learning a {\em sparse and low rank matrix} by 
regularization with the $\ell_1$ norm and a trace norm constraint \cite{emile},
\begin{equation}
\min \{ f(X) + \lambda \|X\|_1 : \|X\|_{tr} \leq B , X \in \R^{n\times n} \} \;,
\label{eq:sparse_low_rank}
\end{equation}
where $\|\cdot\|_1$ denotes the elementwise $\ell_1$ norm of a matrix and
$\|\cdot\|_{tr}$ the trace norm (or nuclear norm). The strongly smooth function $f$
expresses an error term (where the dependence on the data is absorbed in $f$)
and may arise by using, for example, the square loss or the logistic loss. 
This setting has been proposed for applications such as graph denoising or
prediction of links on a social network.
The resulting algorithm (Algorithm \ref{alg:sparse_low_rank})
depends on the proximity operator of the $\ell_1$
norm, also known as the soft thresholding operator,
\beq
\mathcal{S}(X;\gamma) = \sgn(X)\odot (|X|-\gamma)_+ \;,
\label{eq:soft}
\eeq  
where $\sgn, \odot, |\cdot|$ denote elementwise sign, multiplication and absolute value
on matrices and $(\cdot)_+$ the positive part elementwise.

Note that the same algorithm can be used for solving a variation of 
\eqref{eq:sparse_low_rank} that restricts the optimization to 
the space of symmetric matrices. This may occur, for example, when learning
the adjacency matrix of an undirected graph. 
One should ensure, however, that the initial matrix $X_1$ 
is symmetric.

\begin{algorithm}[t]
\caption{Hybrid algorithm for sparse - low rank problems.}
\begin{algorithmic}
\STATE {\bf Input}~ $X_1\in\rnn$ such that $\|X_1\|_{tr} \leq B$
\FOR {$k=1,2,\dots$}
\STATE $Z_k \leftarrow - \nabla f(X_k) - \tfrac{1}{\beta_k} X_k 
+ \tfrac{1}{\beta_k} \mathcal{S}(X_k;\beta_k\lambda)$\vspace{0.5ex}
\STATE $(u_k,v_k)\leftarrow$~ a left and right pair of singular vectors of $Z_k$
corresponding to the largest singular value \vspace{0.5ex}
\STATE $Y_k \leftarrow B \, u_k v_k\trans$ \vspace{0.5ex}
\STATE $X_{k+1} \leftarrow  (1-\alpha_k)X_k + \alpha_k Y_k$ 
\ENDFOR 
\end{algorithmic}
\label{alg:sparse_low_rank}
\end{algorithm}

A problem which shares some similarities with the previous one 
is the convex relaxation of {\em sparse PCA} proposed in \cite{aspremont},
\begin{equation}
\max \{ \lb C, X\rb -\lambda \|X\|_1 : \trace(X) = 1, X \succeq 0, X \in \R^{n\times n} \} \;.
\label{eq:sparse_pca}
\end{equation} 
Solving this optimization problem can be used for finding a dominant sparse eigenvector
of $C$, which is a prescribed $n\times n$ symmetric matrix.
The problem falls under the framework \eqref{eq:hybrid} with $f$ being a linear
function and $\omega$ the indicator function of the (bounded) spectrahedron 
$\{X \in \R^{n\times n}: \trace(X) = 1, X \succeq 0 \}$.
Computation of a dual subgradient amounts to computing a
solution of the problem
\beq
\max\{ \lb Y, Z\rb : \trace(Y) = 1, Y \succeq 0, Y \in \R^{n\times n} \} 
\label{eq:spectra}
\eeq
for a given symmetric matrix $Z\in\R^{n\times n}$. It is easy to see that 
this computation requires a dominant eigenvector of $Z$. This results
in Algorithm \ref{alg:sparse_pca}.

\begin{algorithm}[t]
\caption{Hybrid algorithm for sparse PCA relaxation.}
\begin{algorithmic}
\STATE {\bf Input}~ $X_1\in\rnn$ such that $\trace(X_1) = 1, X_1 \succeq 0$
\FOR {$k=1,2,\dots$}
\STATE $Z_k \leftarrow C - \tfrac{1}{\beta_k} X_k 
+ \tfrac{1}{\beta_k} \mathcal{S}(X_k;\beta_k\lambda)$\vspace{0.5ex}
\STATE $u_k\leftarrow$~ a dominant eigenvector of $Z_k$ \vspace{0.5ex}
\STATE $Y_k \leftarrow u_k u_k\trans$ \vspace{0.5ex}
\STATE $X_{k+1} \leftarrow  (1-\alpha_k)X_k + \alpha_k Y_k$ 
\ENDFOR 
\end{algorithmic}
\label{alg:sparse_pca}
\end{algorithm}

A related problem is to restrict the sparse - low rank optimization 
\eqref{eq:sparse_low_rank} to the cone of positive semidefinite matrices.
This problem has been proposed for estimating a covariance matrix in \cite{emile}.
Since the trace norm is equal to the trace on the positive semidefinite cone,
the algorithm is similar to Algorithm \ref{alg:sparse_pca}. The only differences are
the initialization, a general smooth function $f$ and a factor of $B$ in the
update of $Y_k$.

A third example of an optimization problem that falls under our framework
is a regularization problem with $\ell_1$ and additional penalties,
\begin{equation}
\min \left\{ \frac{1}{2}\lb x, Q x\rb + \lb c, x\rb + 
g(Ax) : \|x\|_1 \leq B, x\in\rd \right \} \;.
\label{eq:qp}
\end{equation}
Here $Q\in\R^{d\times d}$ is a prescribed positive semidefinite matrix, 
$c\in\rd$ a prescribed vector and $g,A$ satisfy Assumption \ref{ass:hybrid}.
For example, $\eqref{eq:qp}$ could arise from an estimation or learning
problem, the quadratic part corresponding to the data fit term.
The $\ell_1$ constraint is used to favor sparse solutions.
The penalty terms $g\circ A$ may involve multiple norms whose
proximity operator is simple to compute, such as the 
group Lasso norm \cite{group_lasso}, total variation norms \cite{rof} etc.

The hybrid method, specialized to such problems, is shown in Algorithm \ref{alg:l1}.
In general, several other algorithms may be used for solving problems like \eqref{eq:qp}
(smoothing, Douglas-Rachford, subgradient methods etc.), but here we are interested
in cases with {\em very large dimensionality} $d$. 
In such cases, computation of the gradient at an arbitrary vector is $\calO(d^2)$ and very costly.
On the other side, in the HCGS algorithm, $x_{k+1}$ is $(k+1)$-sparse
and computing the new gradient $Qx_{k+1}$ can be done efficiently by keeping $Qx_k$ in memory and 
computing $Qy_k$, which is proportional to the $j$-th column of $Q$. 
The latter requires only $\calO(d)$ operations, or $\calO(dm)$
if $Q$ is the square of an $m\times d$ data matrix. 
Thus, HCGS can be applied to such problems at a smaller cost,
by starting with an initial cardinality-one vector and 
stopping before $k$ becomes too large.

There is also an interesting interpretation of Algorithm \ref{alg:l1} as an extension
of {\em matching pursuits} \cite{mallat,tropp} to problems with multiple penalties. 
Assuming that $Q$ is the square of a matrix of dictionary elements (or more generally
a Gram matrix of elements from a Hilbert space), then the algorithm shares similarities with 
orthogonal matching pursuit (OMP). Indeed, such a connection has already been observed
for the standard conjugate gradient (which corresponds to absence of the $g\circ A$ term)
\cite{revisiting,jaggi_arxiv}, the main difference from OMP being in the update of $x_{k+1}$.
Similarly, the HCGS algorithm \ref{alg:l1} could be phrased as an extension of OMP that imposes 
additional penalties $g\circ A$, besides sparsity, on the coefficients of the atoms.
For example, $g\circ A$ could involve {\em structured sparsity} penalties
(such as penalties for group, hierarchical or graph sparsity) and then HCGS would yield
a scalable alternative to structured variants of OMP \cite{huang} or proximal methods
for structured sparsity \cite{mairal}.

\begin{algorithm}[t]
\caption{Hybrid algorithm for sparse multicomposite problems (\ref{eq:qp}).}
\begin{algorithmic}
\STATE {\bf Input}~ $x_1 = Be_i$ for some $i\in\{1,\dots,d\}$
\FOR {$k=1,2,\dots$}
\STATE $z_k \leftarrow - Qx_k-c - \tfrac{1}{\beta_k} A^*A x_k 
+ \tfrac{1}{\beta_k} A^* \prox_{\beta_k g}(Ax_k)$ \vspace{0.5ex}
\STATE $y_{k} \leftarrow B\, \sgn((z_k)_j) e_j$, 
where $j\in\argmax\nolimits_{i=1}^d |(z_k)_j|$ \vspace{0.5ex}
\STATE $x_{k+1} \leftarrow  (1-\alpha_k)x_k + \alpha_k y_k$ \vspace{0.5ex}
\ENDFOR 
\end{algorithmic}
\label{alg:l1}
\end{algorithm}







\section{Simulations}
\label{sec:sim}

\subsection{Simultaneous Sparse and Low Rank Regularization}

In this section we focus on testing Algorithm
\ref{alg:sparse_low_rank} (HCGS) on the estimation of simultaneously
sparse and low rank matrices.\footnote{Code is available at {\tt http://cvn.ecp.fr/personnel/andreas/code/index.html}}
Our aim is to compare the procedure with
the proximal algorithms proposed, for the same task, in
\cite{emile}. The experiments illustrate the fact that
HCGS scales better than the SVD-based alternatives.

 \begin{figure}   
\hspace{-1.5cm}
\begin{tabular}{cc}
	\begin{minipage}{7cm}
  		\includegraphics[width=1\textwidth ]{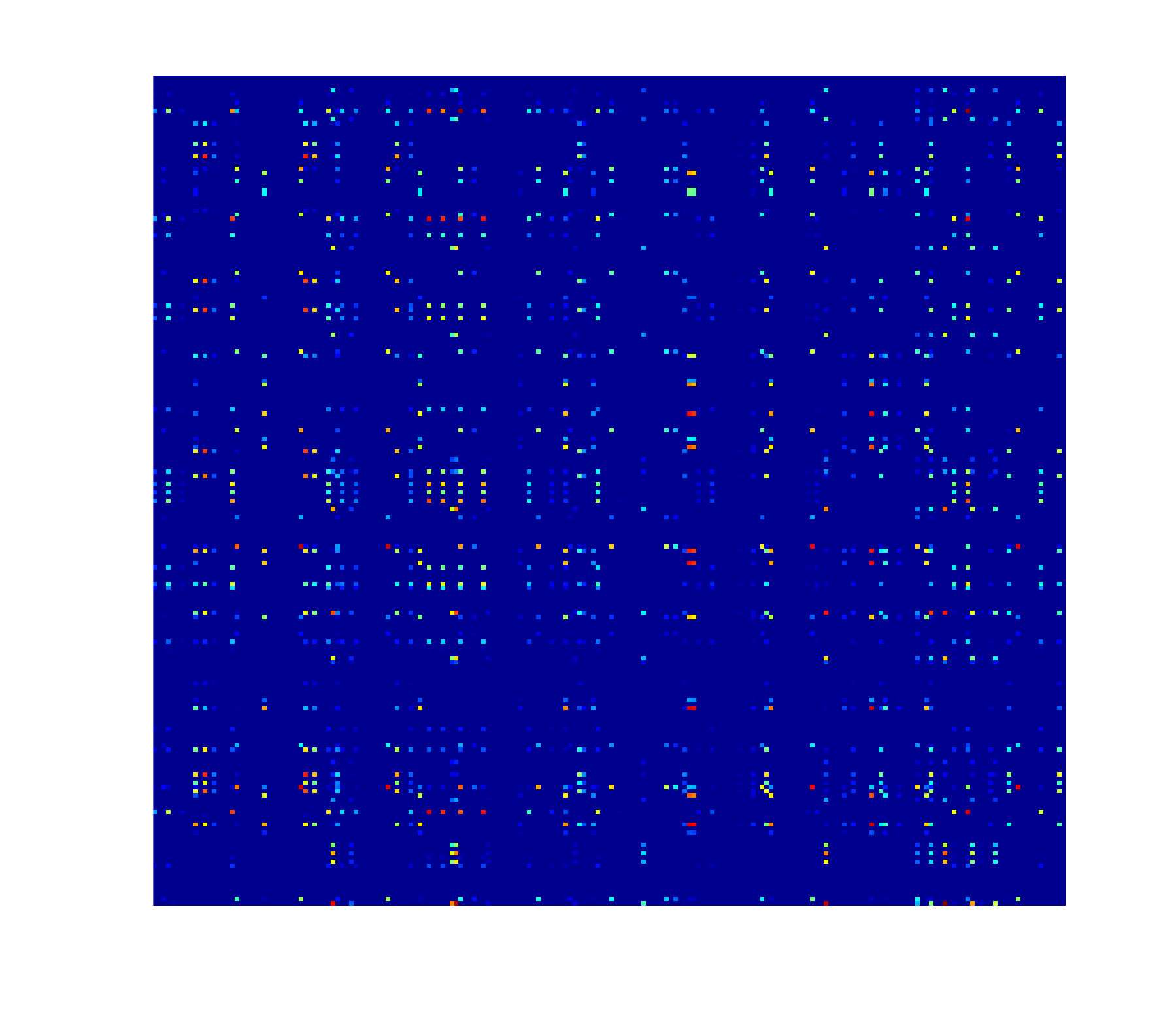}
\begin{center}
	\vspace{-.5cm}
	(a)   
\end{center}
\end{minipage}		
   & 
	\begin{minipage}{7cm}
  		\includegraphics[width=1\textwidth ]{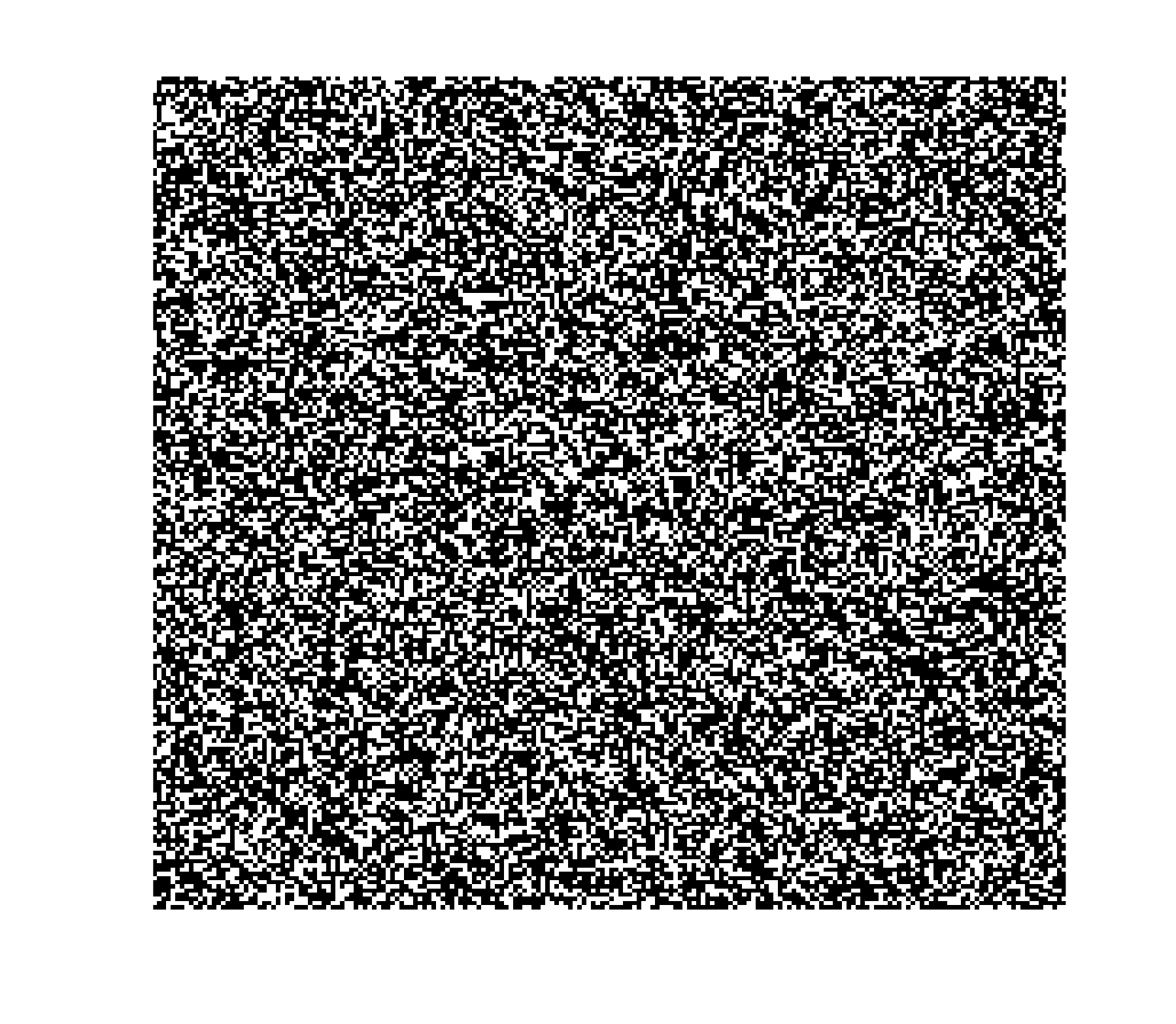}
\begin{center}
	\vspace{-.5cm}
	(b)   
\end{center}
\end{minipage}\\
	\begin{minipage}{7cm}
  		\includegraphics[width=1\textwidth ]{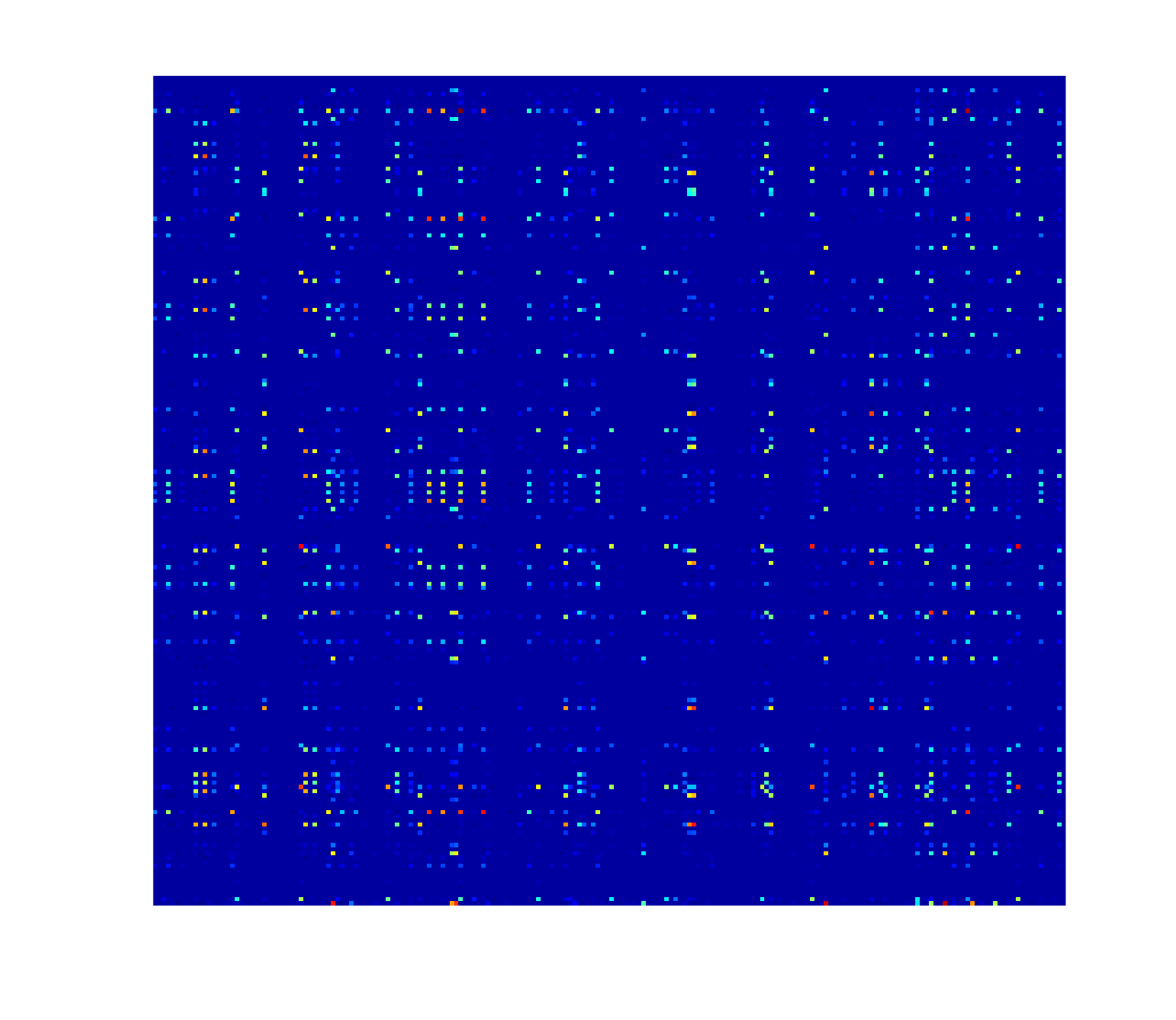}
\begin{center}
	\vspace{-.5cm}
(c)   
\end{center}
\end{minipage}		
   & 
	\begin{minipage}{7cm}
  		\includegraphics[width=1\textwidth ]{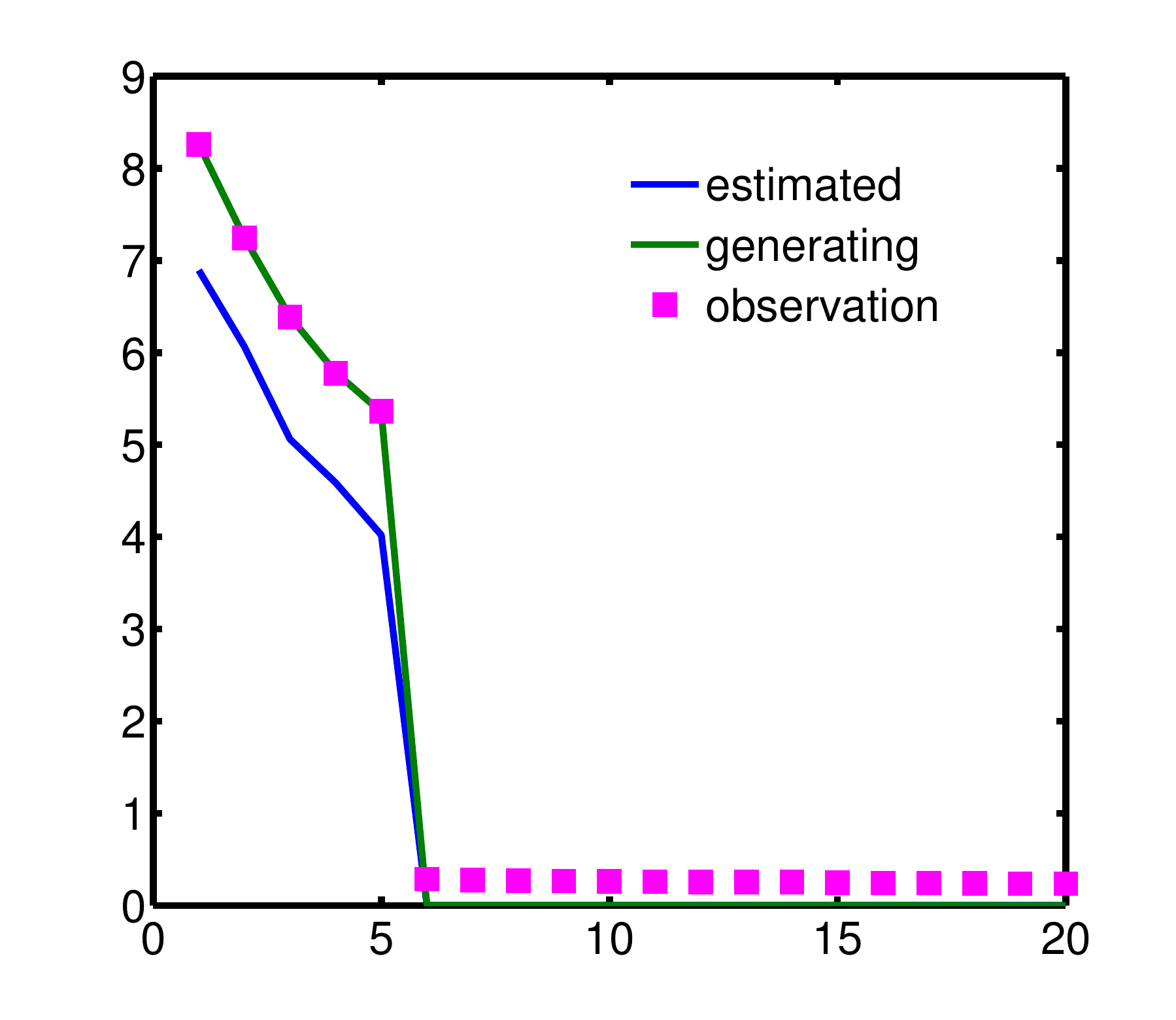}
\begin{center}
	\vspace{-.5cm}
	(d)   
\end{center}
\end{minipage}
\end{tabular} 
 \caption{An illustration of the synthetic problem for
      $N=200$. The generating matrix, simultaneously sparse and
      low-rank (a), a mask with the observed entries, in
      white; $40\%$ of the total number of entries are observed
      (b), the matrix estimated by HCGS (c),
      the leading singular values for the
      generating/observation/estimated matrix
      (d).\label{fig:compfig}} 
  \end{figure}

\begin{figure} 
	\hspace{-1.5cm}
	\begin{tabular}{cc}
			\begin{minipage}{7cm}
		  		\includegraphics[width=1\textwidth ]{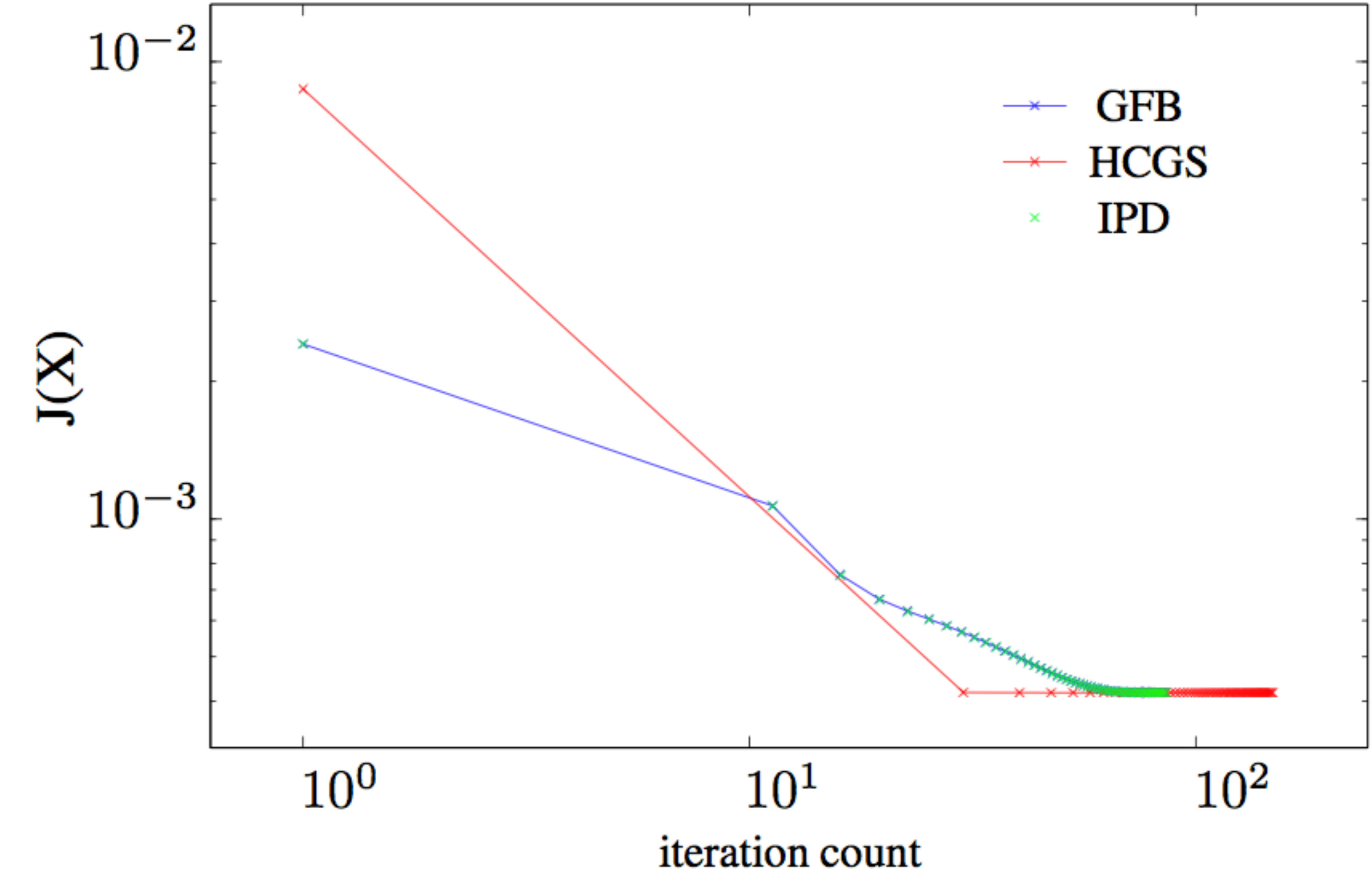}
		\begin{center}
			\vspace{-.5cm}
		~~~~~~~~~~	(a)   
		\end{center}
		\end{minipage}		
		   & 
			\begin{minipage}{7cm}
		  		\includegraphics[width=1\textwidth ]{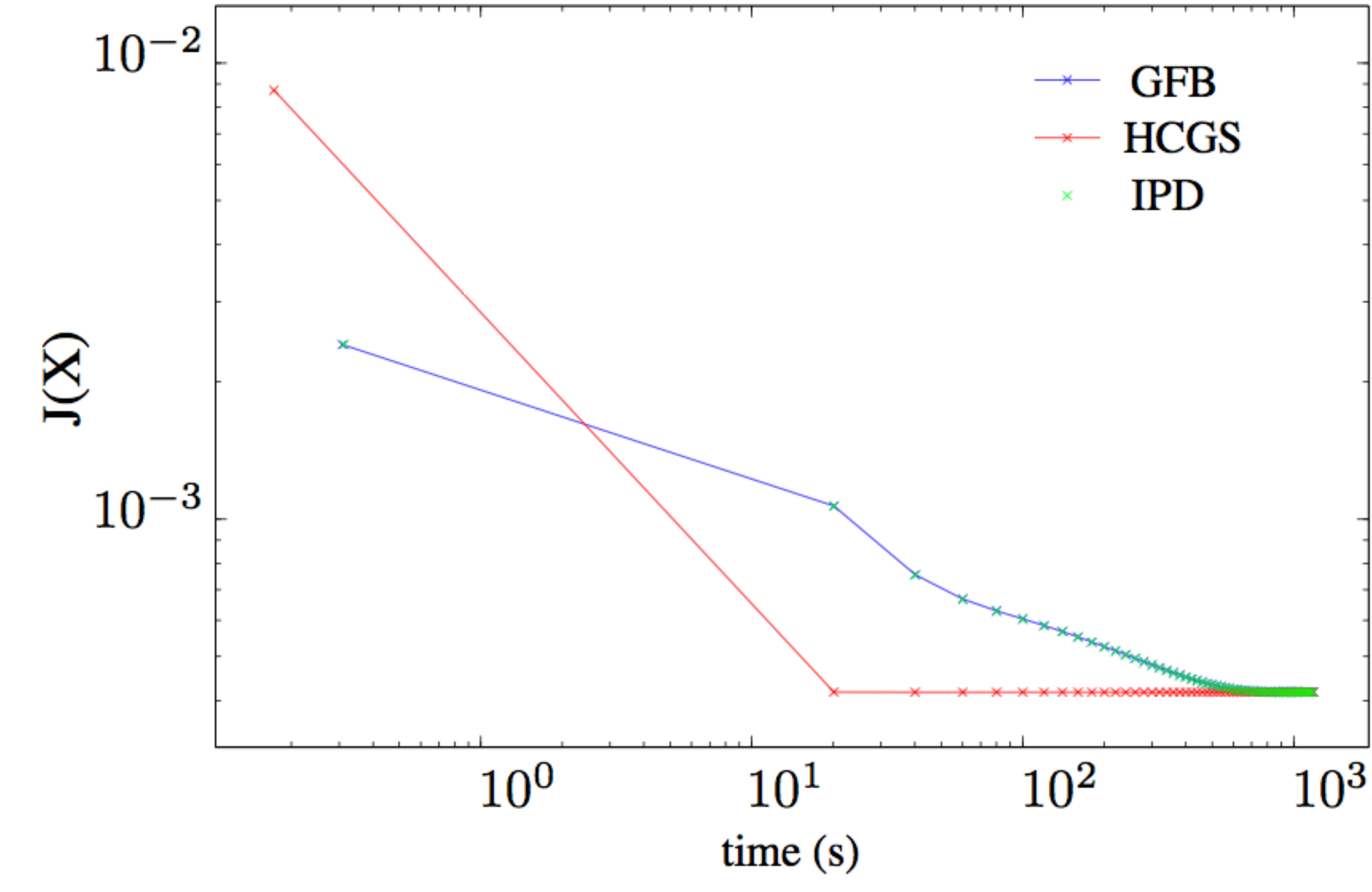}
		\begin{center}
			\vspace{-.5cm}
					~~~~~~~~~~	(b)   
		\end{center}
		\end{minipage} 
		\end{tabular}
	\caption{Comparison of objective values for $N=400$, (a) as a function of the iteration count, and (b) as a function of time.\label{fig:compfig2}}
\end{figure} 

\begin{figure}\begin{center} 
	\includegraphics[width=0.8\textwidth]{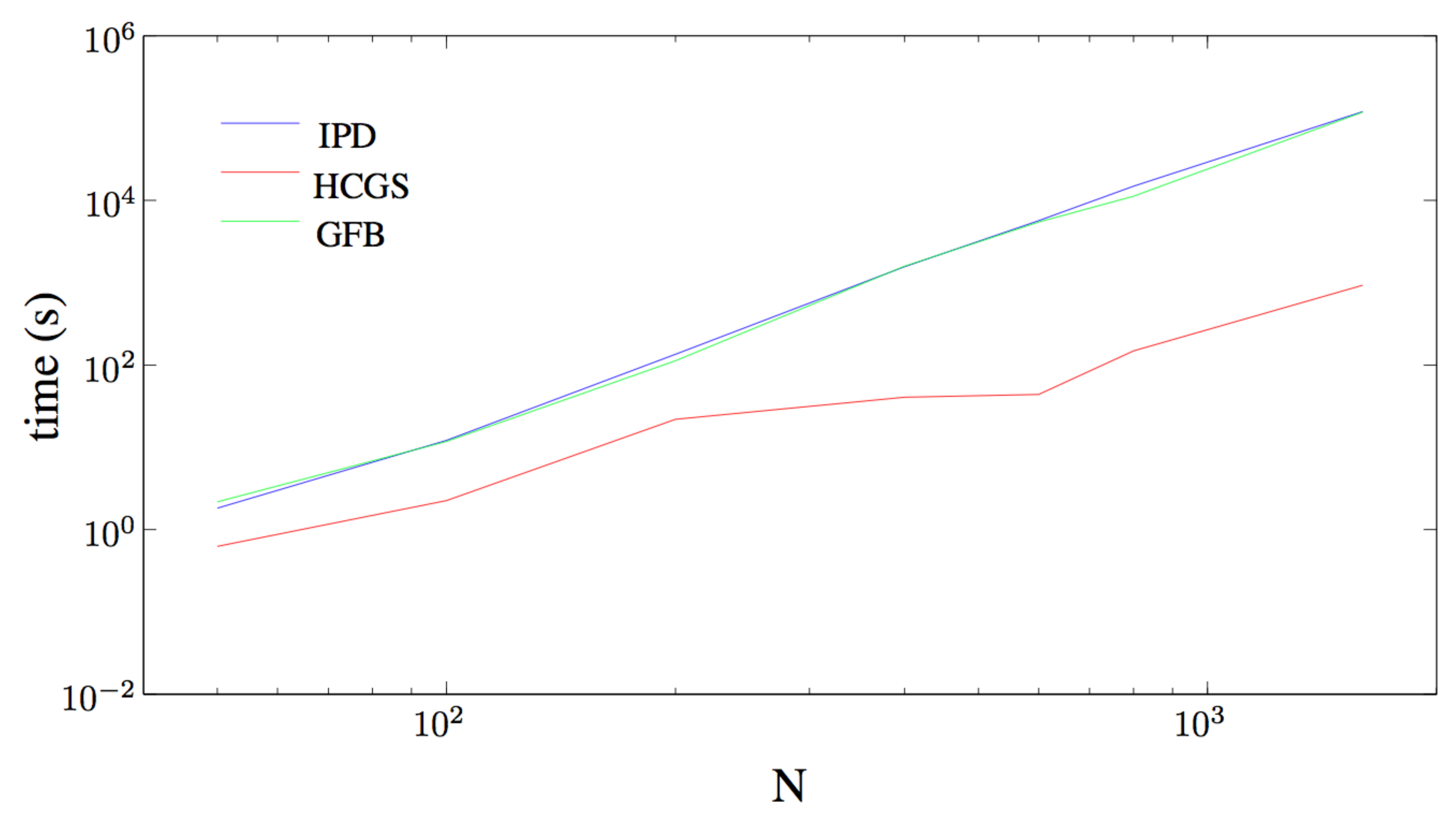}\end{center}
	\caption{Required time for convergence as a function of $N$ ($40\%$ observed entries), for the sparse - low rank experiment.\label{fig:time}}
\end{figure}

We considered the task of recovering a matrix from a subset of its
entries.  To this end in each simulation we generated two $N \times 5$
random matrices with entries drawn from the uniform distribution.
$90\%$ of the entries corresponding to a subset of uniformly
distributed indices were then set to zero.  The resulting matrices,
denoted by $U$ and $V$, were then used to obtain a sparse and low rank
matrix $UV^\top$. This matrix was corrupted by zero-mean Gaussian
noise with variance $\sigma^2=10^{-4}$ to obtain the observation
matrix $Y$. A fraction $f\in\{0.05,0.4\}$ of entries of $Y$
were used for recovery; see Figure \ref{fig:compfig} for an
illustration.

We compared HCGS with the two algorithms proposed in \cite{emile},
namely \emph{generalized forward-backward} (GFB) \cite{raguet} and \emph{incremental
  proximal descent} (IPD) \cite{bertsekas}. Both these algorithms solve a convex matrix
recovery problem that aims at finding a matrix that is simultaneously
low rank and sparse. This problem is:
\begin{equation}\label{form1}
	\min\left\{ J(X):=\frac{1}{2p}\Vert \Omega(Y-X)\Vert^2_F+\lambda_1\Vert X\Vert_1+\lambda_2\Vert X\Vert_{tr}~:~X\in\mathbb{R}^{N\times N}\right\}
\end{equation} 
where $\Vert \cdot\Vert_F$ is the Frobenius norm, $p$ is the number of
observed entries and $\Omega:\mathbb{R}^{N\times N}\rightarrow
\mathbb{R}^{N\times N}$ is the sampling operator defined entry-wise by
$\Omega(X)_{ij}=X_{ij}$ if the entry indexed by $(i,j)$ is observed,
$\Omega(X)_{ij}=0$ otherwise. In our experiments we set
$\lambda_1=\tfrac{1}{N^2},~\lambda_2=\tfrac{10^{-3}}{N^2}$ and used GFB and IPD to obtain
optimal estimates $\hat{X}_{\mathrm{GFB}}$ and
$\hat{X}_{\mathrm{IPD}}$, respectively. We then set $\tau:=\Vert
\hat{X}_{\mathrm{GFB}}\Vert_{tr}$ and used HCGS to solve the
constrained formulation equivalent to \eqref{form1}, namely:
\begin{equation}\label{form2}
	\min\left\{\frac{1}{2p}\Vert \Omega(Y-X)\Vert^2_F+\lambda_1\Vert X\Vert_1~:~
	\Vert X\Vert_{tr}\leq \tau
	,~X\in\mathbb{R}^{N\times N}\right\}~.
\end{equation}
The comparisons were performed on an Intel Xeon with 8 cores and 24GB of memory.


Figure \ref{fig:compfig2} shows the evolution of objective values for
$N = 400$.  Note that for the sake of comparison we have reported the
objective of the optimization problem \eqref{form1} even though HCGS
actually solves the equivalent problem in \eqref{form2}. The same
applies to the attained objective function value $J_{k^*}$
in Table \ref{comptab}. In this table we have also compared the
different algorithms in terms of CPU time\footnote{In Figure
  \ref{fig:compfig2}b the CPU times include also the evaluation of the
  objective value in \eqref{form1} which requires computing the
  singular values of the estimate. In the case of HCGS, this is
  required only for the sake of comparison with GFB and IPD. In
  contrast, for Table \ref{comptab} the objective value of
  \eqref{form1} is computed only upon termination.  }, relative change
in the objective function and number of iterations upon
termination. In all the cases we terminated the algorithms at
iteration $k^*$ when the relative change $r_{k^*}$ in the objective
value:
\begin{equation}\label{relchange}
r_{k^*}= \left\vert  \frac{f_{k^*}-f_{k^*-1}}{f_{k^*-1}} \right\vert
\end{equation}
was less that $10^{-7}$. Note that $f$ in \eqref{relchange} refers to
the objective function actually minimized by each algorithm; this is
not necessarily the objective function $J$ in \eqref{form1}.  
Figure \ref{fig:time} shows the time complexity as a function of
$N$. Finally, in Table \ref{tab:comptab2} we have reported the average
time per iteration as a function of $N$. 

From these figures and tables, we see that the running time of 
HCGS scales as $\calO(N^2)$ with the matrix size, whereas 
both GFB and IPD scale as $\calO(N^3)$.

\begin{table}
\caption{Comparison of different algorithms for convex matrix recovery.}
\vspace{.3cm}
\begin{center}
$5\%$ observed entries\\\vspace{.4cm}
{\scriptsize \begin{tabular}{c|c|c|c|c|c|cc}
N&&$J_{k^*}  (\times 10^{-4}$)&$r_{k^*}$($\times 10^{-8}$)&time
(s)&$k^*$ \\\hline\hline
\multirow{2}{*}{50} & HCGS & 6.77 & 8.35  & 2.38&1605\\
& GFB & 6.87 & 5.15 & 0.09 & 45 \\
& IPD & 6.76 & 9.07& 0.09 & 50 \\ \hline\hline
\multirow{2}{*}{100} & HCGS & 4.41 & 7.52 & 3.60 & 1462 \\
& GFB & 4.45& 9.67& 2.57& 385\\
& IPD & 4.40&9.80& 2.5 & 388\\ \hline\hline
\multirow{2}{*}{200} & HCGS & 4.16  & 2.42  & 66 & 3308 \\
& GFB & 4.17  & 9.95  & 113 & 4136\\
& IPD & 4.16 & 9.83  & 123 & 4645\\ \hline\hline
\multirow{2}{*}{400}& HCGS & 2.98  & 3.35 & 176  & 4555\\
& GFB & 2.98  & 9.99  & 2333 & 15241\\
& IPD & 2.98  & 9.99  & 2389  & 15665\\\hline \hline
\multirow{2}{*}{600} & HCGS & 2.46  & 7.15  & 158  & 2797\\
& GFB & 2.45 & 9.99  & 13157   & 36049\\
& IPD & 2.45  & 9.99  & 13080  & 36408\\\hline  \hline
\multirow{2}{*}{800} & HCGS & 2.20 & 5.47  & 478 & 5197\\
& GFB & 2.20 & 9.99  & 40779  & 61299\\
& IPD & 2.20  & 9.99  & 41263  & 61529
\\\hline  \hline
\end{tabular}\label{comptab}}\end{center}
\vspace{.3cm}
\begin{center}$40\%$ observed entries\\\scriptsize{
\vspace{.4cm}
  \begin{tabular}{c|c|c|c|c|c|cc}
N&&$J_{k^*}  (\times 10^{-4}$)&$r_{k^*}$($\times 10^{-8}$)&time
(s)&$k^*$ \\\hline\hline
\multirow{2}{*}{50} & HCGS & 18.51 & 8.85& 0.62& 427\\
& GFB & 18.66& 8.75 & 1.82& 751\\
& IPD & 18.52& 9.96& 2.17& 1004\\ \hline\hline
\multirow{2}{*}{100} & HCGS & 12.09& 4.96& 2.25& 835\\
& GFB & 12.15& 9.84& 12.1 & 1681\\
& IPD & 12.10 & 6.50&   11.8& 1697\\ \hline\hline
\multirow{2}{*}{200} & HCGS & 7.65 & 9.64 & 21  & 1410\\
& GFB & 7.66 & 9.98 & 134  & 3521\\
& IPD & 7.65& 7.66 & 112  & 3033\\ \hline\hline
\multirow{2}{*}{400}& HCGS & 4.39 & 2.54 & 40  & 1281\\
& GFB & 4.39 & 9.96 & 1559  & 7379\\
& IPD & 4.39 & 9.46 & 1580  & 7515\\\hline \hline
\multirow{2}{*}{600} & HCGS & 3.41 & 7.65 & 43  & 760\\
& GFB & 3.41 & 9.98 & 5664  & 10793\\
& IPD & 3.41 & 9.99 & 5446  & 10841\\\hline  \hline
\multirow{2}{*}{800} & HCGS & 2.68 & 1.37 & 148  & 1378\\
& GFB & 2.68 & 9.98 & 14897  & 15615\\
& IPD & 2.68 & 9.99 & 11242  & 15647
\\\hline  \hline
\multirow{2}{*}{1600} & HCGS & 1.62 & 9.91 & 929 & 2931\\
& GFB & 1.62 & 9.99 & 119724 & 36573\\
& IPD & 1.62 & 9.99 & 118223 & 36583\\\hline   \hline
\end{tabular}\label{tab:comptab}}\end{center}
\end{table}

\begin{table}[t]
	\caption{Average time (in seconds) per iteration as a function of $N$ ($40\%$ observed entries), for the sparse - low rank experiment.}
	\vspace{.3cm} 
	\begin{center}
{\small	\begin{tabular}{c|c|c|c|c|c|}
&\multicolumn{5}{|c|}{$N$}\\
&200&400&600&800&1600\\\hline\hline
HCGS&0.0155&0.032&0.058&0.107&0.317\\  \hline
GFB&  0.038&0.211&0.524&0.954&3.273\\  \hline
IPD&  0.037&0.210&0.502&0.718&3.231\\\hline\hline
	\end{tabular}\label{tab:comptab2}}\end{center}
\end{table}


\subsection{Sparse PCA}


The second set of simulations assesses the computational
efficiency of HCGS on the convex relaxation of sparse PCA \eqref{eq:sparse_pca}.
Similar to \cite{aspremont}, we generated random matrices $C$
as follows. For each size $n$, we drew an $n\times n$
matrix $U$ with uniformly distributed elements in $[0,1]$. 
Then we generated a vector $v\in\R^n$ from the uniform distribution
and set a random 90\% of its components to zero. We then set 
$$
C=UU\trans + 10vv\trans.
$$ 
We solved \eqref{eq:sparse_pca}
for $\lambda=1$ with HCGS (Algorithm \ref{alg:sparse_pca}) 
and the Nesterov smoothing method of \cite{aspremont} which optimizes
the dual problem of \eqref{eq:sparse_pca}.
We implemented both algorithms in Matlab and used a cluster
with 24 cores and sufficient memory. For HCGS we used the power method
with a tolerance of $10^{-6}$, for computing dominant eigenvectors.
We also rescaled the objective function by $n$
in order to keep $L_g$ small enough (see Section \ref{sec:implementation}).
For Nesterov smoothing we used $\mu = \tfrac{10^{-6}}{2\log n}$.

 In Figure \ref{fig:spca}, we plot the computational times required to 
 attain relative change of $10^{-5}$ in the objective. 
 We note that the objective functions are different, since 
HCGS optimizes \eqref{eq:sparse_pca} whereas the method of
 \cite{aspremont} optimizes the dual problem.
In fact, we have verified that the duality gap estimates are consistently larger for the latter 
and hence the running times for Nesterov smoothing are optimistic.
We observe that the running time scales roughly as $\calO(n^2)$ for HCGS
whereas Nesterov smoothing scales worse than $\calO(n^3)$.



 \begin{figure}[t] 
 \begin{center}
\includegraphics[width=.8\textwidth]{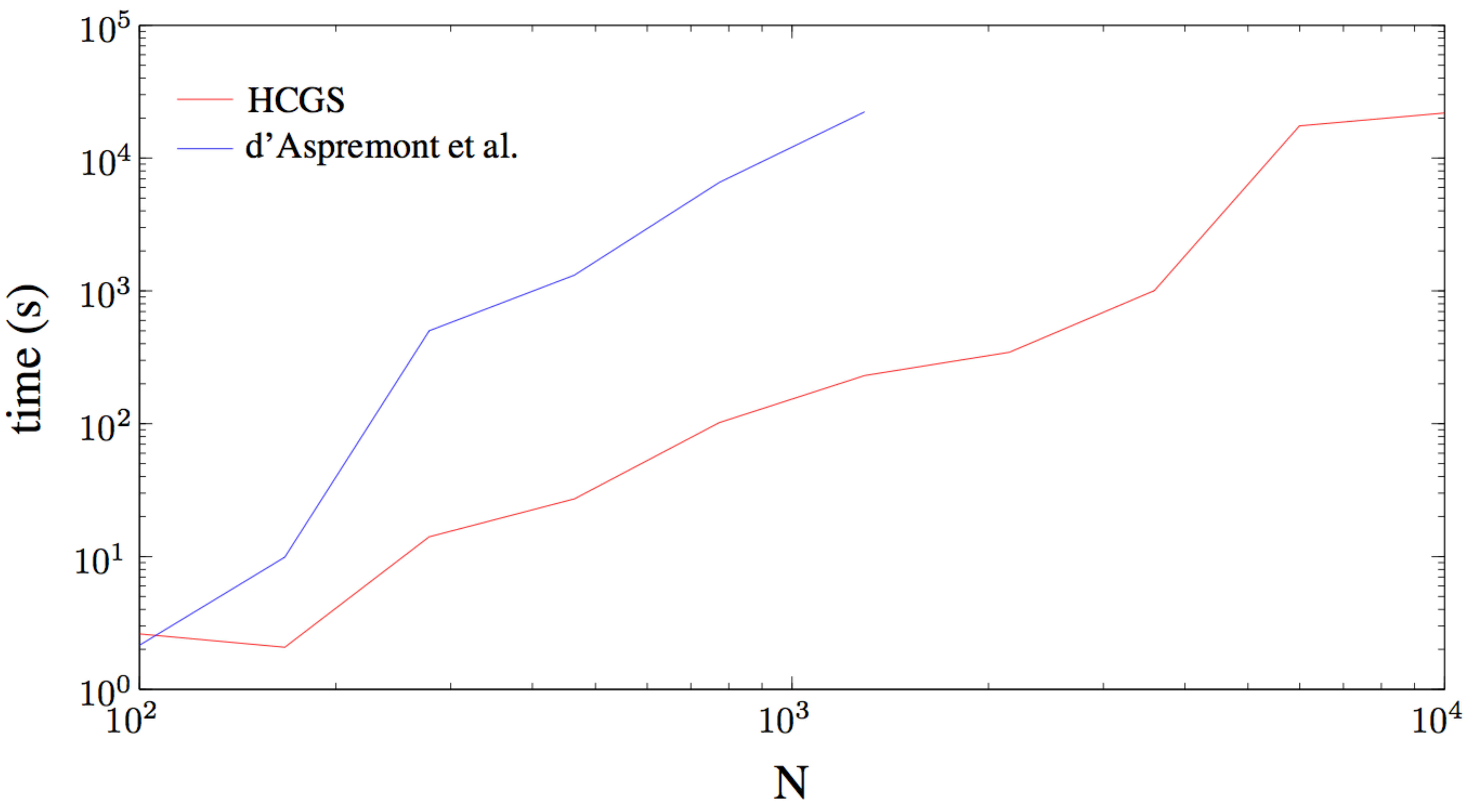}
\caption{Computational time (in seconds) versus matrix size for the sparse PCA experiment.}
 \end{center} 
\label{fig:spca}
 \end{figure}


\section{Conclusion}

We have studied the hybrid conditional gradient - smoothing algorithm (HCGS)
for solving composite convex optimization problems which contain several
terms over a bounded set. Examples of these include regularization problems
with several norms as penalties and a norm constraint.
HCGS extends conditional gradient methods to cases with multiple nonsmooth
terms, in which standard conditional gradient methods may be difficult to apply.
The HCGS algorithm borrows techniques from smoothing proximal methods
and requires first-order computations (subgradients and proximity operations).
Moreover, it exhibits convergence in terms of the objective values at an
$\calO\left(\frac{1}{\varepsilon^2}\right)$ rate of iterations.  
Unlike proximal methods, HCGS benefits from the advantages of conditional
gradient methods, which render it more efficient on certain large scale optimization problems.
We have demonstrated these advantages with simulations on two matrix optimization problems: 
regularization of matrices with combined $\ell_1$ and trace norm penalties;
and a convex relaxation of sparse PCA.

\section*{Acknowledgments}

The research leading to the above results has received funding from the European Union Seventh Framework
Programme (FP7 2007-2013) under grant agreement No. 246556.
The research has also received funding from the
European Union Seventh Framework Programme (FP7 2007-2013) under
grant agreement No. 246556. Research supported also by: Research Council
KUL: GOA/10/09 MaNet, PFV/10/002 (OPTEC), several PhD/postdoc and
fellow grants; Flemish Government:IOF: IOF/KP/SCORES4CHEM, FWO:
PhD/postdoc grants, projects: G.0377.12 (Structured systems), G.083014N
(Block term decompositions), G.088114N (Tensor based data similarity),
IWT: PhD Grants, projects: SBO POM, EUROSTARS SMART, iMinds 2013,
Belgian Federal Science Policy Oce: IUAP P7/19 (DYSCO, Dynamical 
systems, control and optimization, 2012-2017), EU: FP7-SADCO (MC ITN-
264735), ERC AdG A-DATADRIVE-B (290923), COST: Action ICO806: IntelliCIS.


\bibliographystyle{plain}

\end{document}